\newtheorem{theorem}{Theorem}
\newtheorem{lemma}{Lemma}
\newtheorem{proposition}{Proposition}
\theoremstyle{remark}
\begin{document}

\markboth{Ritabrata Munshi}{Subconvexity for twists of $GL(3)$ $L$-functions}
\title[Subconvexity for twists of $GL(3)$ $L$-functions]{The circle method and bounds for $L$-functions - II: Subconvexity for twists of $GL(3)$ $L$-functions}

\author{Ritabrata Munshi}   
\address{School of Mathematics, Tata Institute of Fundamental Research, 1 Dr. Homi Bhabha Road, Colaba, Mumbai 400005, India.}     
\email{rmunshi@math.tifr.res.in}

\begin{abstract}
Let $\pi$ be a $SL(3,\mathbb Z)$ Hecke-Maass cusp form. Let $\chi=\chi_1\chi_2$ be a Dirichlet character with $\chi_i$ primitive modulo $M_i$. Suppose $M_1$, $M_2$ are primes such that $\sqrt{M_2}M^{4\delta}<M_1<M_2M^{-3\delta}$, where $M=M_1M_2$ and $0<\delta<1/28$. In this paper we will prove the following subconvex bound
$$
L\left(\tfrac{1}{2},\pi\otimes\chi\right)\ll_{\pi,\varepsilon} M^{\frac{3}{4}-\delta+\varepsilon}.
$$
\end{abstract}

\subjclass{11F66, 11M41}
\keywords{subconvexity, $GL(3)$ Maass forms, twists}

\maketitle


\section{Introduction}
\label{intro}

Let $\pi$ be a  Hecke-Maass cusp form for $SL(3,\mathbb Z)$ with normalized Fourier coefficients $\lambda(m,n)$. Let $\chi$ be a primitive Dirichlet character modulo $M$. The twisted $L$-series $L(s,\pi\otimes\chi)$, which is given by 
$$
\sum_{n=1}^\infty \lambda(1,n)\chi(n)n^{-s}
$$
in the domain $\sigma=\text{Re}(s)>1$, extends to an entire function and satisfies a functional equation with arithmetic conductor $M^3$. The subconvexity problem for this $L$-function has recently drawn much attention, and has been solved in several special cases in \cite{B}, \cite{Mu1}, \cite{Mu2} and \cite{Mu3}. (Also see \cite{L} for $t$-aspect subconvexity.) However, all these works suffer from a major drawback that they deal only with forms which are (symmetric square) lifts of $GL(2)$ forms. Subconvexity for twists of `genuine' $GL(3)$ forms still remains untouched. In this paper we make a modest attempt to bridge the gap by proving the following.      
\begin{theorem}
\label{mthm}
Let $\pi$ be a  Hecke-Maass cusp form for $SL(3,\mathbb Z)$, and let $\chi=\chi_1\chi_2$ be a Dirichlet character with $\chi_i$ primitive modulo $M_i$. Suppose $M_1$, $M_2$ are primes such that $\sqrt{M_2}M^{4\delta}<M_1<M_2M^{-3\delta}$, where $M=M_1M_2$ and $0<\delta<1/28$. Then we have
$$
L\left(\tfrac{1}{2},\pi\otimes\chi\right)\ll_{\pi,\varepsilon} M^{\frac{3}{4}-\delta+\varepsilon}.
$$
\end{theorem}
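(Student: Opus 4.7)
The plan is to bound, via the approximate functional equation for $L(s,\pi\otimes\chi)$, each dyadic piece
$$S(N) = \sum_n \lambda(1,n)\chi(n) V(n/N)$$
for $N$ running up to $M^{3/2+\varepsilon}$, the critical range being $N \asymp M^{3/2}$. The bound $S(N) \ll N \cdot M^{-\delta+\varepsilon}$, which is a saving of $M^{\delta}$ beyond the trivial bound coming from Cauchy--Schwarz together with Rankin--Selberg for $\pi\otimes\widetilde{\pi}$, then yields the theorem.

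The strategy I would pursue is a conductor-lowering circle method exploiting the factorization $M = M_1 M_2$. Introducing an auxiliary variable $m$ with smooth weight $W(m/N)$, I write
$$S(N) = \sum_{n,m} \lambda(1,n)\chi(n) V(n/N) W(m/N) \delta(n-m),$$
and expand $\delta(n-m)$ by Kloosterman's version of the circle method with modulus of the form $qM_1$, where $q$ ranges over a dyadic interval $q \asymp Q$ to be optimized. The point of inserting $M_1$ into the modulus is that when the additive character $e(a(n-m)/(qM_1))$ is split by CRT, on the $n$-side the mod-$M_1$ piece combines with $\chi_1(n)$ into a complete character sum that evaluates to a Gauss sum of size $\sqrt{M_1}$, effectively absorbing $\chi_1$ into an arithmetic weight. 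Opening the remaining twist $\chi_2$ via its own Gauss sum produces an additive character mod $M_2$, casting the $n$-sum in the shape
$$\sum_n \lambda(1,n)\,e\!\left(\tfrac{\alpha n}{qM}\right) V(n/N),$$
to which the $GL(3)$ Voronoi summation formula applies directly.

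I would then dualize in parallel: Voronoi on the $n$-variable (modulus $qM$) produces a dual sum of length $(qM)^3/N$ with hyper-Kloosterman weights, while Poisson on $m$ (modulus $qM_1$) yields a very short dual $m$-sum with Ramanujan-type arithmetic weights. Next, Cauchy--Schwarz in the dual $n$-variable (smoothed out via Rankin--Selberg for $\pi\otimes\widetilde{\pi}$) followed by one further Poisson summation inside the opened square should reduce everything to estimating an algebraic exponential sum modulo $qM_1M_2$. By Deligne's bounds, this sum should exhibit square-root cancellation outside an explicit degenerate locus, which is where the decisive saving comes from.

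The principal obstacle is twofold. Analytically, one must establish square-root cancellation for the final mixed exponential sum, which interlaces a multiplicative character mod $M_2$ with additive characters mod $M_1$ and mod $q$; the degenerate locus where the Deligne-type estimate collapses has to be isolated and handled by direct estimation. Combinatorially, the optimization is a tight three-way balance: $Q$ must be tuned so that the off-diagonal contribution, which prefers $M_1$ small relative to $M_2$, is matched by the diagonal contribution, which prefers $M_1$ large, and both admit the saving $M^{\delta}$. The asymmetric range $\sqrt{M_2}M^{4\delta} < M_1 < M_2 M^{-3\delta}$ is precisely the window in which these demands can be simultaneously met. A subsidiary hurdle is the stationary-phase treatment of the archimedean integrals produced by Voronoi and Poisson, which should be routine relative to the arithmetic issues above.
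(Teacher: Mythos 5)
There is a genuine structural gap, and it lies at the very first step of your construction. You attach the character to the $GL(3)$ variable, writing $S(N)=\sum_{n,m}\lambda(1,n)\chi(n)V(n/N)W(m/N)\delta(n-m)$, so that after converting $\chi=\chi_1\chi_2$ into additive characters the $n$-sum carries an additive character of modulus $qM$ and Voronoi must be applied with that modulus. The paper does the opposite: it writes $S(N)=\sum\sum_{M_1\mid n-m}\lambda(1,n)\chi(m)\,\delta\bigl((n-m)/M_1\bigr)V(n/N)V^\star((n-m)/N)$, keeping $\chi$ on the smooth auxiliary variable $m$. The conductor lowering is the factorization of $\delta(n-m)$ into the congruence $n\equiv m\bmod M_1$ times $\delta((n-m)/M_1)$, which cuts the circle-method parameter to $Q=\sqrt{N/M_1}$; the full conductor $M$ then enters only through the Poisson summation on the long smooth $m$-sum (where it is harmless, producing Gauss sums and the congruence $am\equiv M_2\bmod q$), while the $GL(3)$ Voronoi summation on $n$ sees only the modulus $qM_1$. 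This is what makes the dual $n$-sum short, of length $(qM_1)^3/N\ll N^{1/2}M_1^{3/2}$, and the Voronoi step a genuine saving precisely when $M_1<M_2$.

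In your arrangement the Voronoi step is a loss, not a gain. With modulus $c=qM\asymp QM$ the dual sum has length $c^3/N$ and the standard accounting gives $\sum_n\lambda(1,n)e(\alpha n/c)V(n/N)\approx c^{3/2}=(QM)^{3/2}$; at the critical range $N\asymp M^{3/2}$, $Q=\sqrt{N/M_1}$, this is $M^{21/8}M_1^{-3/4}\geq M^{3/4}\cdot N$, i.e.\ at least $M^{3/4}$ worse than the trivial bound, and no choice of $Q$ repairs this. Your subsequent Cauchy--Schwarz and Poisson would then have to recover more than $M^{3/4+\delta}$, and you give no accounting that they can; in the paper they only need to recover the modest deficit left after a Voronoi step that is already close to break-even. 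A second symptom of the misarrangement is your description of the final Deligne sum as mixing a multiplicative character mod $M_2$ with additive characters mod $M_1$ and $q$: in the paper the sum requiring Deligne (via Denef--Loeser and Fu) lives entirely over $(\mathbb F_{M_1}^\star)^3$ and involves $\chi_1$, while the $M_2$ and $q$ parts reduce to elementary Kloosterman/Ramanujan sum computations. The rest of your outline (approximate functional equation, Kloosterman circle method, Poisson on $m$, Voronoi on $n$, Cauchy, Poisson, Deligne) matches the paper's skeleton, but the proof does not go through without relocating $\chi$ to the auxiliary variable and factoring the $\delta$-symbol as above.
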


The conductor of the $L$-function is of size $M^3$ (ignoring the dependence on the form $\pi$). Hence the convexity bound is given by $L\left(\tfrac{1}{2},\pi\otimes\chi\right)\ll M^{3/4}$. The result is similar to the one proved in \cite{Mu2} for the symmetric square $L$-function. But the method used here is different and is based on the ideas introduced in \cite{Mu}. In particular we do not compute moment. We use the circle method, with a `conductor lowering' trick, directly to the approximate functional equation as a device for separation of oscillation. Then we use Poisson summation and $GL(3)$ Voronoi summation formula. Even after getting square root cancellation in the resulting complete character sums, we observe that we are still short of convexity. Also summing over the moduli seems quite difficult. Instead we apply Cauchy to escape from the `trap of involution', and then apply Poisson summation formula once again. This leads us to complicated mixed character sums. Square root cancellation for such sums follows from the work of Deligne. This gives us enough savings to break the convexity barrier. Finally we note that the assumption that $M_i$ are primes is only a technical convenience.\\

We use a similar approach in the companion paper \cite{Mu0} to establish $t$-aspect subconvexity for $SL(3,\mathbb Z)$ Hecke-Maass forms. There we use an archimedean analogue of our conductor lowering trick, and use stationary phase method (in place of Deligne's bound) to get `square-root' cancellation in certain exponential integrals.


\section{Preliminaries}
\label{prelim}

Suppose $\pi$ is a Maass form of type $(\nu_1,\nu_2)$ for $SL(3,\mathbb Z)$ which is an eigenfunction of all the Hecke operators with Fourier coefficients $\lambda(m_1, m_2)$, normalized so that $\lambda(1, 1)=1$ (for details see Goldfeld's book \cite{G}). We introduce the Langlands parameters $(\alpha_1, \alpha_2, \alpha_3)$, defined by $\alpha_1=-\nu_1-2\nu_2+1$, $\alpha_2=-\nu_1+\nu_2$ and $\alpha_3=2\nu_1+\nu_2-1$. The Ramanujan-Selberg conjecture predicts that $\text{Re}(\alpha_i)=0$, and from the work of Jacquet-Shalika we (at least) know that $|\text{Re}(\alpha_i)|<\frac{1}{2}$. \\

The Voronoi summation formula (see \cite{L}, \cite{MS}) will play a crucial role in our analysis. Let $g$ be a compactly supported function on $(0,\infty)$, and let $\tilde g(s)=\int_0^\infty g(x)x^{s-1}dx$ be the Mellin transform. For $\ell=0,1$ define
\begin{align}
\label{gammma-factor}
\gamma_{\ell}(s)=\frac{1}{2\pi^{3(s+\frac{1}{2})}}\prod_{i=1}^3\frac{\Gamma\left(\frac{1+s+\alpha_i+\ell}{2}\right)}{\Gamma\left(\frac{-s-\alpha_i+\ell}{2}\right)}
\end{align}
and set $\gamma_\pm(s)=\gamma_0(s)\mp i\gamma_1(s)$. For $g$ as above we define the integral transforms 
\begin{align}
\label{gl}
G_{\pm}(y)=\frac{1}{2\pi i}\int_{(\sigma)}y^{-s}\gamma_{\pm}(s)\tilde g(-s)\mathrm{d}s
\end{align}
where $\sigma>-1+\max\{-\text{Re}(\alpha_1),-\text{Re}(\alpha_2),-\text{Re}(\alpha_3)\}$. Let  
$$
S(a,b;c)=\sideset{}{^\star}\sum_{\alpha\bmod{c}}e\left(\frac{a\alpha+b\overline{\alpha}}{c}\right)
$$ 
be the Kloosterman sum, where $\bar{\alpha}$ denotes the multiplicative inverse of $\alpha\bmod{c}$ and $e(z)=e^{2\pi iz}$.
 
\begin{lemma}
Let $g$ be a compactly supported function on $(0,\infty)$, we have
\begin{align}
\label{voronoi3}
\sum_{n=1}^\infty \lambda(1,n)e\left(\frac{an}{q}\right)g(n)=&q\sum_\pm \sum_{n_1|q}\sum_{n_2=1}^\infty \frac{\lambda(n_2,n_1)}{n_1n_2}S(\bar a, \pm n_2; q/n_1)G_\pm\left(\frac{n_1^2n_2}{q^3}\right),
\end{align}
where $(a,q)=1$ and $\bar{a}$ denotes the multiplicative inverse of $a\bmod{q}$. 
\end{lemma}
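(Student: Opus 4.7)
The plan is to reduce the formula to the functional equations of the Dirichlet twists $L(s,\pi\otimes\chi)$, which are classical. The starting point is Mellin inversion. Since $g$ is compactly supported, we have $g(n)=\frac{1}{2\pi i}\int_{(c)}\tilde g(-s)\,n^{s}\,ds$ for any $c$ sufficiently negative, or equivalently $g(n)=\frac{1}{2\pi i}\int_{(c')}\tilde g(s)\,n^{-s}\,ds$ at any $c'$ large. Substituting and interchanging (legal since $\tilde g$ decays rapidly on vertical lines and $\sum_n|\lambda(1,n)|n^{-s}$ converges absolutely for $\Re(s)>1$) yields
$$
\sum_{n=1}^\infty \lambda(1,n)e\!\left(\tfrac{an}{q}\right)g(n)=\frac{1}{2\pi i}\int_{(c')}\tilde g(s)\,\Psi_a(s;q)\,ds,
$$
where $\Psi_a(s;q)=\sum_{n\ge 1}\lambda(1,n)e(an/q)n^{-s}$ is the additively twisted Dirichlet series.

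Next I would establish the needed functional equation for $\Psi_a(s;q)$ by a character decomposition. Splitting the summation according to $n_1=(n,q)$ and writing $n=n_1 m$ with $(m,q/n_1)=1$, the additive character factorises as $e(an/q)=e(am/(q/n_1))$. Opening this with primitive Dirichlet characters modulo $q/n_1$ and their Gauss sums, the inner sums rewrite as linear combinations of the multiplicatively twisted series $\sum_m \lambda(1,n_1 m)\chi(m)m^{-s}$. Using the Hecke relation
$$
\lambda(1,n_1 m)=\sum_{d\mid(n_1,m)}\mu(d)\,\lambda(d,n_1/d)\,\lambda(1,m/d)
$$
(and its variants), each such series decomposes into Euler factors matching $L(s,\pi\otimes\chi)$ up to a finite local correction at primes dividing $q$. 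Each $L(s,\pi\otimes\chi)$ satisfies the standard $GL(3)$ functional equation with gamma factor $\gamma_\pm(s)$ defined in \eqref{gammma-factor}, so $\Psi_a(s;q)$ inherits a functional equation of the shape
$$
\Psi_a(s;q)\;\longleftrightarrow\;\sum_\pm\sum_{n_1\mid q}\sum_{n_2\ge 1}\frac{\lambda(n_2,n_1)}{n_1 n_2}\,S(\bar a,\pm n_2;q/n_1)\left(\frac{n_1^{2}n_2}{q^{3}}\right)^{\!s}\gamma_\pm(-s),
$$
the Kloosterman sum arising from the combination of Gauss sums $\tau(\chi)\chi(a)$ summed over primitive $\chi\bmod q/n_1$.

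Finally I would shift the contour in the Mellin integral from $(c')$ to $(-\sigma)$ with $\sigma$ as in \eqref{gl}. Since $\pi$ is cuspidal, $\Psi_a(s;q)$ is entire, so no residues are crossed. Applying the functional equation and recognising the resulting Mellin integral as $G_\pm(n_1^{2}n_2/q^{3})$ by the very definition \eqref{gl} yields the claimed identity, with the factor of $q$ out front coming from the normalisation of the Gauss sums.

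The main obstacle is the combinatorial bookkeeping in the middle step: one must track how the Gauss sums of imprimitively induced characters reassemble into the single Kloosterman sum $S(\bar a,\pm n_2;q/n_1)$, and simultaneously verify that the Hecke multiplicativity of $\lambda(n_2,n_1)$ absorbs the bad Euler factors at primes dividing $q$. This bookkeeping is carried out in detail in \cite{MS} (and from the automorphic viewpoint in \cite{L}); the proof here is to appeal to their calculation. The assumption that $g$ be compactly supported is only used to justify Mellin inversion and contour manipulations, and is automatic for the applications made later in the paper.
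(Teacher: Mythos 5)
The paper does not actually prove this lemma: it is imported wholesale from \cite{MS} and \cite{L} (the text before the statement says only ``see \cite{L}, \cite{MS}''), so your closing move of appealing to those references for the hard step is, in the end, the same move the paper makes, and as a citation-plus-motivation your write-up is serviceable. But you should be aware that the step you dismiss as ``combinatorial bookkeeping'' is not bookkeeping --- it is the theorem. The route you sketch (Mellin inversion, expansion of $e(am/(q/n_1))$ into Dirichlet characters via Gauss sums, the functional equation of $L(s,\pi\otimes\chi)$, contour shift) is the classical derivation and it goes through cleanly essentially only when $q/n_1$ is prime, so that every nonprincipal character mod $q/n_1$ is primitive. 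For general composite $q$ --- and the circle-method moduli $q\leq Q$ and $qM_1$ appearing later in the paper are genuinely composite --- the character decomposition produces imprimitive characters whose Gauss sums degenerate, a principal-character term contributing an untwisted $L$-function with missing Euler factors, and twisted $L$-functions whose conductors are proper divisors of $(q/n_1)^3$. Showing that the Hecke relations reassemble all of these contributions into the single clean Kloosterman sum $S(\bar a,\pm n_2;q/n_1)$ is precisely the difficulty; it was not resolved by the naive functional-equation argument at the time this paper was written, and was only carried out along these lines in later literature.

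A second, related inaccuracy: \cite{MS} does not perform the calculation you defer to it. Miller and Schmid prove the $GL(3)$ Voronoi formula for general modulus by an entirely different method (automorphic distributions and their boundary realizations), in part because the twisted-functional-equation approach is problematic for composite $q$; \cite{L} then quotes their result. So either present the lemma as a black-box citation, exactly as the paper does, or restrict your sketch to prime modulus where it is honest --- but do not present the general case as a routine consequence of the functional equations of $L(s,\pi\otimes\chi)$ with the details ``carried out in \cite{MS}''.
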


The following lemma is also well-known (see \cite{Mol}).
\begin{lemma}
\label{ram-on-av}
We have
$$
\mathop{\sum\sum}_{n_1^2n_2\leq x}|\lambda(n_1,n_2)|^2\ll x^{1+\varepsilon},
$$
where the implied constant depends on the form $\pi$ and $\varepsilon$.\\
\end{lemma}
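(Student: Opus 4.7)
The inequality is a $GL(3)$ Rankin-Selberg estimate on average, and the natural strategy is to deduce it from the analytic properties of the $GL(3)\times GL(3)$ Rankin-Selberg $L$-function $L(s,\pi\times\tilde\pi)$ combined with a standard positivity trick.

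First I would introduce the Dirichlet series
$$D(s)=\sum_{n_1,n_2\geq 1}\frac{|\lambda(n_1,n_2)|^2}{(n_1^2n_2)^s}.$$
Using Hecke multiplicativity of the $\lambda(m,n)$ together with the explicit local factors coming from the Langlands parameters $\alpha_1,\alpha_2,\alpha_3$, the Euler factors of $D(s)$ at every prime match those of $L(s,\pi\times\tilde\pi)/\zeta(3s)$, so
$$D(s)=\frac{L(s,\pi\times\tilde\pi)}{\zeta(3s)}\qquad(\mathrm{Re}(s)>1).$$
By the theorem of Jacquet-Piatetski-Shapiro-Shalika, the Rankin-Selberg $L$-function is given by an absolutely convergent Euler product in $\mathrm{Re}(s)>1$ (and continues meromorphically with a unique simple pole at $s=1$). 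Consequently $D(1+\varepsilon)<\infty$ for every fixed $\varepsilon>0$, with a bound depending only on $\pi$ and $\varepsilon$.

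To finish, I would apply Rankin's positivity trick. Since $|\lambda(n_1,n_2)|^2\geq 0$ and the condition $n_1^2n_2\leq x$ forces $\bigl(x/(n_1^2n_2)\bigr)^{1+\varepsilon}\geq 1$, one can estimate
$$\sum_{n_1^2n_2\leq x}|\lambda(n_1,n_2)|^2\leq \sum_{n_1,n_2\geq 1}|\lambda(n_1,n_2)|^2\left(\frac{x}{n_1^2n_2}\right)^{\!1+\varepsilon}=x^{1+\varepsilon}\,D(1+\varepsilon)\ll_{\pi,\varepsilon}x^{1+\varepsilon},$$
which is the claimed bound. The only step that uses a non-elementary input is the absolute convergence of $D(s)$ in $\mathrm{Re}(s)>1$, which rests on the full Rankin-Selberg theory on $GL(3)\times GL(3)$; once that is granted, the estimate is immediate. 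I do not expect a genuine obstacle here. Alternatively, applying Perron's formula to $D(s)$ and shifting the contour past the simple pole of $L(s,\pi\times\tilde\pi)$ at $s=1$ (using standard vertical-strip bounds together with the nonvanishing of $\zeta(3s)$ on $\mathrm{Re}(s)>1/3$) would yield the sharper asymptotic $\sum_{n_1^2n_2\leq x}|\lambda(n_1,n_2)|^2\sim c_\pi\,x$, which is stronger than what is needed for the applications in this paper.
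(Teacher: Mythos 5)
Your argument is correct: the local identity $\sum_{k_1,k_2}|\lambda(p^{k_1},p^{k_2})|^2x^{2k_1+k_2}=(1-x^3)\prod_{i,j}(1-\alpha_i\tilde\alpha_jx)^{-1}$ is exactly the Cauchy identity for Schur functions (see Goldfeld, Prop.~7.4.12), absolute convergence of $L(s,\pi\times\tilde\pi)$ in $\mathrm{Re}(s)>1$ is Jacquet--Piatetski-Shapiro--Shalika, and Rankin's trick then gives the bound; moreover, by positivity you only need the coefficients of $D(s)$ to be \emph{dominated} by those of $L(s,\pi\times\tilde\pi)$, so even the exact identity is more than required. The paper itself gives no proof and simply cites Molteni, whose general result rests on the same mechanism (positivity plus convergence of the Rankin--Selberg series at $1+\varepsilon$), so your proof is essentially the standard one underlying that citation.
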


Finally let us recall that the approximate functional equation implies that
\begin{align}
\label{afe}
L\left(\tfrac{1}{2},\pi\otimes\chi\right)\ll M^{\varepsilon}\sup_{N\leq M^{3/2+\varepsilon}} \frac{S(N)}{\sqrt{N}}
\end{align}
where $S(N)$ are sums of the type 
$$
S(N):=\sum_{n=1}^\infty \lambda(1,n)\chi(n)V\left(\frac{n}{N}\right)
$$
for some smooth function $V$ supported in $[1,2]$ and satisfying $V^{(j)}(x)\ll_j 1$. Hence to establish subconvexity we need to show cancellation in the sum for $N$ of size $M^{3/2}$, roughly speaking.

\vspace{.5cm}

We will now briefly recall a version of the circle method. In the previous paper \cite{Mu} in this series we used Jutila's version of the circle method with factorizable moduli to gain structural advantage. In the present case Kloosterman's version of the circle method works better. (One can also use $\delta$-symbol method.) Let
$$
\delta(n)=\begin{cases} 1&\text{if}\;\;n=0;\\
0&\text{otherwise}.\end{cases}
$$
\begin{lemma}\label{delta-symbol}
Let $Q$ be a positive real number. Then for any integer $n\in\mathbb Z$ we have
\begin{align}
\label{cm}
\delta(n)=2\:\text{Re}\int_0^1\mathop{\sum\sideset{}{^\star}\sum}_{1\leq q\leq Q <a\leq q+Q}\frac{1}{aq}e\left(\frac{n\bar a}{q}-\frac{nx}{aq}\right)\mathrm{d}x.
\end{align}
The $\star$ on the sum indicates that the sum over $a$ is restricted by the condition $(a,q)=1$. 
\end{lemma}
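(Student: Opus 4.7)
My plan is to derive the identity from a Farey dissection of the unit interval combined with the symmetry $\alpha \mapsto 1-\alpha$, which corresponds to complex conjugation of $e(n\alpha)$ for integer $n$. Starting from the trivial identity $\delta(n)=\int_0^1 e(n\alpha)\,d\alpha$, I would observe that if $J\subset[0,1]$ is any measurable set satisfying $J \sqcup (1-J)=[0,1]$ up to measure zero, then the substitution $\alpha\mapsto 1-\alpha$ combined with $e(n)=1$ gives $\int_{1-J}e(n\alpha)\,d\alpha=\overline{\int_J e(n\alpha)\,d\alpha}$, whence $\delta(n)=2\,\mathrm{Re}\int_J e(n\alpha)\,d\alpha$. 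The problem thus reduces to exhibiting a concrete $J$, expressible as a disjoint union of intervals indexed by the stated pairs $(a,q)$, for which each contributing integral matches the summand on the right-hand side of \eqref{cm}.

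For this I would take $J_{a,q}:=[\bar a/q-1/(aq),\ \bar a/q]\pmod 1$, where $\bar a\in\{1,\dots,q\}$ is the inverse of $a$ modulo $q$, and let $J=\bigsqcup_{(a,q)}J_{a,q}$. On each $J_{a,q}$ the change of variable $\alpha=\bar a/q-x/(aq)$ with $x\in[0,1]$ has Jacobian $|d\alpha|=dx/(aq)$ and converts $\int_{J_{a,q}}e(n\alpha)\,d\alpha$ into $\frac{1}{aq}\int_0^1 e\bigl(n\bar a/q-nx/(aq)\bigr)\,dx$; summing over the index set reproduces exactly the double sum inside the integral in \eqref{cm}. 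All of the analytic content of the lemma is thereby exhausted.

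The remaining task is a combinatorial claim about Farey fractions: the intervals $J_{a,q}$ are pairwise disjoint and their union satisfies $J\sqcup(1-J)=[0,1]$ up to measure zero. I would prove this by identifying the index set bijectively with $\mathcal{F}_Q$ equipped with its left-neighbor data. If $p/r<\bar a/q$ are consecutive fractions in $\mathcal{F}_Q$, the classical Farey theorem gives $\bar a r-pq=1$ and $r\in(Q-q,Q]$; setting $a=q+r$ recovers $a\bar a\equiv 1\pmod q$ and $a\in(Q,q+Q]$, while a direct computation shows the interval from the mediant $(p+\bar a)/(r+q)$ up to $\bar a/q$ has length $1/(q(q+r))=1/(aq)$, i.e.\ is exactly $J_{a,q}$. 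So $J_{a,q}$ is the left half of the Farey arc around $\bar a/q$, the assignment $(a,q)\mapsto \bar a/q$ is a bijection onto $\mathcal{F}_Q$, and the reflection $\alpha\mapsto 1-\alpha$ permutes $\mathcal{F}_Q$ via $\bar a/q\mapsto(q-\bar a)/q$ while interchanging left and right halves of Farey arcs. Consequently $1-J$ is the union of the right halves, and $J\sqcup(1-J)=[0,1]$ as required.

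The main obstacle I anticipate is purely bookkeeping: handling the degenerate case $q=1$ (where $\bar a/q\equiv 0\equiv 1$ is a single endpoint, giving the arcs adjacent to $0$ and $1$), taking care with intervals that straddle $0\pmod 1$, and verifying the bijection at the extremal denominators $r\in\{Q-q+1,\ldots,Q\}$. No deep estimate or oscillatory input is needed; once the Farey combinatorics is set up, the identity is an exact reformulation of $\int_0^1 e(n\alpha)\,d\alpha=\delta(n)$, and I would not expect any genuine difficulty beyond careful accounting.
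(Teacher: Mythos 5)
Your argument is correct and is essentially the standard proof of this identity: the paper itself gives no proof but defers to \cite{IK}, where the formula is derived by exactly this Farey dissection of $[0,1]$ into arcs around $\bar a/q$, with the left-neighbor relation $\bar a r-pq=1$, $r\in(Q-q,Q]$, $a=q+r$ giving the arc-half of length $1/(aq)$ and the reflection $\alpha\mapsto 1-\alpha$ accounting for the factor $2\,\mathrm{Re}$. The Farey facts you invoke (consecutivity criterion, mediants, the range of the neighboring denominator, and the bijection $(a,q)\mapsto\bar a/q$) are all standard and correctly applied, so no gap remains beyond the bookkeeping you already flag.
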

(For a proof of this formula see \cite{IK}.) There are well understood drawbacks in this form of circle method. However in our treatment these do not create any problem as we will not need to execute the complete character sum over $a$. We will only need the fact that $a\asymp Q$. (The notation $\alpha\asymp A$ means that there exists absolute constants $0<c_1<c_2$ such that $c_1A< |\alpha| <c_2A$.)

\section{Application of circle method}
\label{setup}

We will establish the following. 
\begin{proposition}
\label{prop1}
Suppose $M_i$ are as in Theorem~\ref{mthm}. We have
\begin{align}
\label{prop-eq}
S(N)\ll N^{3/4}\left(M_1^{3/4}+\sqrt{M_2}\right)M^\varepsilon+\frac{N\sqrt{M_2}}{M_1}M^\varepsilon.
\end{align}
\end{proposition}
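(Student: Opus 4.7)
The starting point is $S(N) = \sum_n \lambda(1,n)\chi_1(n)\chi_2(n)V(n/N)$, with the critical range being $N \asymp M^{3/2}$. Following the strategy of \cite{Mu}, my first move is a \emph{conductor-lowering} device: introduce an auxiliary parameter $K$ and a smooth bump $U$ supported on $[1,2]$, and use the identity $\chi_1(n) = \chi_1(m)$ whenever $M_1 \mid n-m$ to rewrite
$$S(N) \approx \frac{M_1}{K}\sum_{n,m}\mathbf{1}_{M_1\mid n-m}\,\lambda(1,n)\chi_2(n)\chi_1(m)V(n/N)U(m/K).$$
The key effect is that $\chi_1$ is now attached to an auxiliary variable of length $K$, so the only remaining multiplicative character on the $GL(3)$ side is $\chi_2$, of modulus $M_2$ rather than $M_1M_2$.

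Writing $n - m = M_1 r$ with $r \in \mathbb Z$ of size $\asymp N/M_1$, I next apply Kloosterman's delta symbol (Lemma~\ref{delta-symbol}) to the equation $n - m - M_1 r = 0$, with a parameter $Q$ to be chosen. This introduces sums over $q \leq Q$ and $a \asymp Q$ together with the additive phases $e((n-m-M_1 r)\bar a/q - (n-m-M_1 r)x/(aq))$. I then dualize each variable in turn: apply $GL(3)$ Voronoi (Lemma~\ref{voronoi3}) to the $n$-sum after splitting into residues mod $M_2$ (effective modulus $qM_2$), producing dual variables $n_1 \mid qM_2$ and $n_2 \geq 1$; apply ordinary Poisson summation to the $m$-sum modulo $qM_1$, producing a dual $m'$; and treat the $r$-sum by Poisson modulo $q$ (or equivalently by direct integration against $x$).

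A trivial estimate at this stage still falls short of convexity, so I apply Cauchy--Schwarz, keeping the $n_2$-sum inside the square and using Lemma~\ref{ram-on-av} to control $\sum |\lambda(n_2,n_1)|^2$ on the outside. Opening the square doubles the $(m',r')$ variables, and I then apply Poisson summation once more to the surviving $n_2$-sum. This converts the arithmetic content into a complete mixed exponential sum modulo $qM_1M_2$, which factors by the Chinese remainder theorem (using coprimality of the primes $q,M_1,M_2$) into independent pieces on each prime. The three terms in~\eqref{prop-eq} should then emerge from the off-diagonal contribution after the second Poisson, yielding $N^{3/4}(M_1^{3/4}+\sqrt{M_2})M^\varepsilon$ via the square-root bound on the character sum combined with the ranges of the dual variables, and from the diagonal contribution of the opened square, which cannot be dualized and gives the term $\frac{N\sqrt{M_2}}{M_1}M^\varepsilon$.

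The decisive technical step is obtaining Deligne-type square-root cancellation for the complete mixed character sum modulo $qM_1M_2$. On each prime piece the sum has to be cast as a fibered exponential sum over an algebraic variety, and one must verify the appropriate non-degeneracy so that Weil--Deligne estimates apply on the generic fibers. This is the main obstacle of the argument; the remainder of the proof is careful bookkeeping of the Kloosterman, Gauss and character factors produced by CRT, together with an optimization of $K$ and $Q$ to balance the three terms in the claimed bound.
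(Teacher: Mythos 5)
Your submission is an outline rather than a proof: the parameters $K$ and $Q$ are never chosen, no estimate is actually carried out, and the one step you yourself identify as decisive --- square-root cancellation for the complete mixed character sum after the second Poisson --- is left as an acknowledged ``main obstacle.'' The specific bound \eqref{prop-eq}, with its three precisely calibrated terms, cannot be extracted from what is written. Beyond incompleteness, the structural setup deviates from the paper's in a way that would damage the numerology. In the paper the auxiliary variable $m$ is \emph{long} (of length $N$, weighted by $V^\star((n-m)/N)$) and carries the \emph{entire} character $\chi=\chi_1\chi_2$; Poisson in $m$ modulo $qM$ then gives a very short dual sum ($|m|\ll M^{1+\varepsilon}Q/N$) together with a Gauss-sum saving of size $\sqrt{qM}$, while the $GL(3)$ variable $n$ carries no multiplicative character at all, so Voronoi is applied with the \emph{small} modulus $qM_1$ and the dual length is only $L\ll Q^3M_1^3/N$. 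In your version $\chi_2(n)$ stays attached to $\lambda(1,n)$, so Voronoi must be performed with modulus $qM_2$, inflating the dual length to $\asymp Q^3M_2^3/N$; since $M_2>M_1$ and the diagonal term after Cauchy--Schwarz scales like $\sqrt{L}$, this alone would replace $N^{3/4}M_1^{3/4}$ by something of the order $N^{3/4}M_2^{3/4}$ (or worse), which is no longer subconvex in the stated range. Your short auxiliary variable of length $K$ carrying only $\chi_1$ also forfeits the large Gauss-sum saving that the long $m$-variable provides.

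You have also misattributed the origin of the term $N\sqrt{M_2}\,M^\varepsilon/M_1$. In the paper it does \emph{not} come from the diagonal of the opened Cauchy square; it arises from discarding the moduli $q$ divisible by $M_1$ in the circle method (Lemma~\ref{circ}) and from the degenerate pieces of the analysis (the $b=0$ term $\tilde S_0(N)$ and the $\ell=M_1$ term in the Voronoi dual sum). The diagonal $n_2=0$, $m=m'$ contribution after the second Poisson is precisely what produces $N^{3/4}M_1^{3/4}$, and the off-diagonal $n_2\neq 0$ contribution produces $N^{3/4}\sqrt{M_2}$, the latter resting on the Deligne-type bound $B^\star\ll M_1^{5/2}$ for a three-variable mixed character sum modulo $M_1$ (Lemma~\ref{bstar}, via the non-degeneracy criterion of Denef--Loeser and Fu). Without carrying out that bound and the accompanying bookkeeping, and without repairing the placement of $\chi_2$, the proposal does not establish the proposition.
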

For $N\leq M_2^2$ the trivial bound $S(N)\ll NM^\varepsilon$, which follows from Lemma \ref{ram-on-av}, is sufficient for our purpose as in this case $N\leq N^{3/4}\sqrt{M_2}$. Clearly Theorem \ref{mthm} follows from the bound given in \eqref{prop-eq} and \eqref{afe}. In the rest of the paper we will prove the proposition for $N> M_2^2$ .\\

As in \cite{Mu} we will apply the circle method directly to the smooth sum  $S(N)$ which appears in \eqref{afe}. But instead of doing it in one step, we will break it up into two steps using the modulus $M_1$, viz.
$$
S(N)=\mathop{\sum\sum}_{\substack{n,m=1\\M_1|n-m}}^\infty \lambda(1,n)\chi(m)\:\delta\left(\frac{n-m}{M_1}\right)V\left(\frac{n}{N}\right)V^\star\left(\frac{n-m}{N}\right).
$$
Here $V^\star$ is an even smooth function supported in $[-1,1]$, and $V^\star(0)=1$, $V^{\star (j)}\ll_j 1$. The (integral) equation $n-m=0$ is equivalent to the congruence $n-m\equiv 0\bmod{M_1}$ and the (integral) equation $(n-m)/M_1=0$. This `factorization' process acts like a conductor lowering mechanism, as the modulus $M_1$ is already present in the character $\chi$. A similar idea was used in a different vein in \cite{BM}.\\

Applying Lemma \ref{delta-symbol}, and choosing $Q=\sqrt{N/M_1}$, we get 
$$
S(N)=S^+(N)+S^-(N),
$$
where 
$$
S^\pm (N)=\mathop{\sum\sideset{}{^\star}\sum}_{1\leq q\leq Q <a\leq q+Q}\frac{1}{aq}\mathop{\sum\sum}_{\substack{n,m=1\\M_1|n-m}}^\infty \lambda(1,n)\chi(m)e\left(\pm\frac{\bar a(n-m)/M_1}{q}\right)W\left(\frac{n}{N},\frac{m}{N},\frac{\pm N}{M_1aq}\right),
$$
with
$$
W(u,v,y)=\int_0^1e\left((v-u)xy\right)V\left(u\right)V^\star\left(u-v\right)\mathrm{d}x.
$$ 
For notational simplicity we will only analyse $S^+(N)$. The analysis of $S^-(N)$ is just similar. We further approximate $S^+(N)$ by
$$
\tilde S(N):=\mathop{\sum\sideset{}{^\star}\sum}_{\substack{1\leq q\leq Q <a\leq q+Q\\(q,M_1)=1}}\frac{1}{aq}\mathop{\sum\sum}_{\substack{n,m=1\\M_1|n-m}}^\infty \lambda(1,n)\chi(m)e\left(\frac{\overline{aM_1}(n-m)}{q}\right)W\left(\frac{n}{N},\frac{m}{N},\frac{N}{M_1aq}\right).
$$\\

\begin{lemma}
\label{circ}
We have
$$
S^+(N)=\tilde S(N)+O_{\pi,\varepsilon}\left(\frac{N\sqrt{M_2}}{M_1}M^{\varepsilon}\right).
$$
\end{lemma}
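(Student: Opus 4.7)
The plan is to identify $S^+(N) - \tilde S(N)$ as the sub-sum over moduli $q$ with $M_1\mid q$, and then to bound the resulting contribution using $\sqrt{M_2}$ cancellation in the inner character sum via Polya--Vinogradov.

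First I verify that the two summands agree whenever $(q, M_1) = 1$: in that range $(n-m)/M_1 \equiv \overline{M_1}(n-m) \pmod q$ (multiply both sides by $M_1$), so $\bar a(n-m)/M_1 \equiv \overline{aM_1}(n-m) \pmod q$. Hence $S^+(N) - \tilde S(N)$ is precisely the portion with $M_1\mid q$. Since $M_1$ is prime and $q \leq Q = \sqrt{N/M_1}$, I write $q = M_1 q'$ with $q' \leq Q/M_1$.

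Next, substituting $m = n + M_1 k$ makes the congruence $M_1\mid n-m$ automatic and restricts $|k| \leq N/M_1$. The factorisation $\chi(m) = \chi_1(n)\chi_2(n + M_1 k)$ (valid when $(n, M_1) = 1$) turns the inner $k$-sum into the shape
$$
\sum_{|k| \leq N/M_1} \chi_2(n+M_1k)\,\Phi(k)\,e\!\left(-\frac{\bar a k}{M_1 q'}\right),
$$
where $\Phi$ is a smooth, compactly supported weight absorbing $V^*$ and the $x$-integral from $W$. As $\gcd(M_1, M_2) = 1$, the map $k \mapsto \chi_2(n+M_1 k)$ is a non-trivial Dirichlet character of period $M_2$. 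Expanding it through Gauss-sum inversion, estimating the resulting linear exponential sums by $\sum_k e(\theta k) \ll \min(K, \|\theta\|^{-1})$, and using partial summation for $\Phi$, yields the uniform Polya--Vinogradov bound
$$
\left|\sum_k \chi_2(n+M_1k)\,\Phi(k)\,e\!\left(-\frac{\bar a k}{M_1 q'}\right)\right| \ll \sqrt{M_2}\,M^{\varepsilon},
$$
independent of $n$, $a$, $q'$ and of the $x$-variable from $W$.

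The remaining estimates are standard: Cauchy--Schwarz with Lemma~\ref{ram-on-av} gives $\sum_n |\lambda(1,n)|V(n/N) \ll N^{1+\varepsilon}$; the outer weights satisfy $\sideset{}{^\star}\sum_a 1/(aq) \leq 1/Q$ because $a \asymp Q$ and there are at most $q$ admissible $a$'s; and the number of $q \leq Q$ with $M_1\mid q$ is at most $Q/M_1$. Assembling these pieces yields
$$
|S^+(N) - \tilde S(N)| \ll \frac{Q}{M_1}\cdot \frac{1}{Q}\cdot N^{1+\varepsilon} \cdot \sqrt{M_2}\,M^\varepsilon = \frac{N\sqrt{M_2}}{M_1}\,M^{\varepsilon},
$$
as desired. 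The only subtlety is the uniform Polya--Vinogradov estimate in the additive twist $e(-\bar a k/(M_1 q'))$, which is a routine completion-of-sum argument.
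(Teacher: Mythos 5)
Your reduction to the moduli $q$ with $M_1\mid q$, the substitution $m=n+M_1k$, and the outer bookkeeping ($\sum_a 1/(aq)\ll 1/Q$, at most $Q/M_1$ bad moduli, $\sum_n|\lambda(1,n)|V(n/N)\ll N^{1+\varepsilon}$) all match what is needed. The gap is in the central claim: the uniform bound $\sqrt{M_2}\,M^\varepsilon$ for
$\sum_{|k|\le N/M_1}\chi_2(n+M_1k)\,\Phi(k)\,e(-\bar a k/(M_1q'))$
is false in the relevant range. P\'olya--Vinogradov controls the \emph{untwisted} incomplete character sum because, after Gauss-sum completion, the zero frequency carries the weight $\bar\chi_2(0)=0$. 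With the additive twist the frequencies become $-\bar a/(M_1q')+hM_1/M_2$ ($h\bmod M_2$), and exactly one of these, say $h_0$, lies within $1/(2M_2)$ of an integer; since $(M_1q',M_2)=1$ its distance to $\mathbb Z$ is only guaranteed to be $\ge 1/(M_1q'M_2)$ minus the perturbation from the $x$-integral, and for suitable $a$ (e.g.\ $a\equiv -M_2\bmod{M_1q'}$) it can be smaller than $M_1/N$. That resonant term then contributes $\asymp (N/M_1)/\sqrt{M_2}$ with no further cancellation, which exceeds $\sqrt{M_2}$ as soon as $N\gg M^{1+\varepsilon}$ --- well inside the range $N\le M^{3/2+\varepsilon}$ that the lemma must cover. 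So the step ``independent of $n$, $a$, $q'$'' does not hold, and inserting the corrected worst-case bound $(N/M_1)/\sqrt{M_2}+\sqrt{M_2}\log M_2$ uniformly in $a$ gives only $N^2/(M_1^2\sqrt{M_2})$, which loses by a factor $N/M$.

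The fix is to keep track of the resonance. Either (i) follow the paper: apply Poisson summation in $m$ to the full modulus $M_1^2M_2q'$, which produces a complete character sum $C(m,q')$ that \emph{vanishes} unless $am\equiv M_2\bmod{M_1q'}$ and is $\ll q'M_1\sqrt{M_2}$ otherwise, together with the truncation $|m|\ll M^{1+\varepsilon}Q/N$ of the dual variable; the resonance condition is then explicit and the count of surviving frequencies gives the stated bound. Or (ii) repair your argument by bounding the resonant term by $\min\bigl(N/M_1,\|{-\bar a/(M_1q')}+h_0M_1/M_2\|^{-1}\bigr)/\sqrt{M_2}$ and \emph{averaging over} $a$: for each $q'$ the least residue $r$ of $-\bar aM_2\bmod{M_1q'}$ determines $a$, so $\sum_a\min(N/M_1,\ M_1q'M_2/|r|)\ll N/M_1+M_1q'M_2\log M$, and the extra term contributes $\ll N^2M^\varepsilon/(M_1^2Q\sqrt{M_2})$, which one checks is $\ll N\sqrt{M_2}M^\varepsilon/M_1$ precisely because $N\le M^{3/2+\varepsilon}$ and $M_1<M_2$. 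As written, though, the uniform P\'olya--Vinogradov step is where the proof breaks.
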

\begin{proof}
We want to estimate
$$
E=\mathop{\sum\sum}_{\substack{1\leq q\leq Q/M_1\\ Q<a\leq qM_1+Q\\(a,qM_1)=1}}\frac{1}{aqM_1}\mathop{\sum\sum}_{\substack{n,m=1\\M_1|n-m}}^\infty \lambda(1,n)\chi(m)e\left(\frac{\bar a(n-m)/M_1}{qM_1}\right)W\left(\frac{n}{N},\frac{m}{N},\frac{N}{M_1^2aq}\right).
$$
In the sum, $q$ ranges up to $Q/M_1\leq M_1^{-3/4}M_2^{3/4}M^{\varepsilon} \leq  M_1^{3/4}M^{\varepsilon}$, where the last inequality follows from our assumption regarding the sizes of $M_i$. Hence $(q,M)=1$. We apply Poisson summation to the sum over $m$ with modulus $M_1^2M_2q$. This yields
$$
E=\frac{N}{M_1^3M_2}\mathop{\sum\sum}_{\substack{1\leq q\leq Q/M_1\\ Q<a\leq qM_1+Q\\(a,qM_1)=1}}\frac{1}{aq^2}\mathop{\sum\sum}_{\substack{n,m\in\mathbb Z\\n>0}} \lambda(1,n)e\left(\frac{\bar an}{qM_1^2}\right)C(m,q)\int_{\mathbb R}W\left(\frac{n}{N},y,\frac{N}{M_1^2aq}\right)e\left(\frac{-mNy}{M_1^2M_2q}\right)\mathrm{d}y.
$$
where
$$
C(m,q)=\sum_{\substack{b \bmod{M_1^2M_2q}\\b\equiv n\bmod{M_1}}}\chi(b)e\left(\frac{-\bar ab}{qM_1^2}\right)e\left(\frac{mb}{M_1^2M_2q}\right).
$$
Using the coprimality of $M_1$, $M_2$ and $q$, we evaluate the character sum, and observe that $C(m,q)=0$ unless $am\equiv M_2\bmod{M_1q}$, in which case we have $|C(m,q)|\ll qM_1\sqrt{M_2}$. The integral on the other hand is negligibly small if $|m|\gg M^{1+\varepsilon}Q/N$ (which follows from repeated integration by parts), otherwise it is bounded by $O(1)$. So it follows that 
$$
E\ll \frac{N}{M_1^3M_2Q}\sum_{q\leq Q/M_1}\sum_{n\leq 2N}\frac{|\lambda(1,n)|}{q^2}\mathop{\sum}_{1\leq |m|\ll M^{1+\varepsilon}Q/N}\;qM_1\sqrt{M_2} \ll \frac{N\sqrt{M_2}}{M_1}M^{\varepsilon}.
$$ 
The last inequality follows from Lemma~\ref{ram-on-av}.
\end{proof}

Next we detect the congruence $n\equiv m\bmod{M_1}$ in the definition of $\tilde S(N)$ using exponential sums to obtain
$$
\frac{1}{M_1}\mathop{\sum\sideset{}{^\star}\sum}_{\substack{1\leq q\leq Q <a\leq q+Q\\(q,M_1)=1}}\frac{1}{aq}\sum_{b\bmod{M_1}}\mathop{\sum\sum}_{\substack{n,m=1}}^\infty \lambda(1,n)\chi(m)e\left(\frac{(\overline{aM_1}M_1+bq)(n-m)}{qM_1}\right)W\left(\frac{n}{N},\frac{m}{N},\frac{N}{M_1aq}\right).
$$
We split  this as 
$$
\tilde S(N)=\tilde S_0(N)+T(N)
$$
where
$$
\tilde S_0(N)=\frac{1}{M_1}\mathop{\sum\sideset{}{^\star}\sum}_{\substack{1\leq q\leq Q <a\leq q+Q\\(q,M_1)=1}}\frac{1}{aq}\mathop{\sum\sum}_{\substack{n,m=1}}^\infty \lambda(1,n)\chi(m)e\left(\frac{\overline{aM_1}(n-m)}{q}\right)W\left(\frac{n}{N},\frac{m}{N},\frac{N}{M_1aq}\right)
$$
is the contribution of $b=0$.\\

In the rest of the paper we will analyse $T(N)$, and we will show that the bound \eqref{prop-eq} holds for $T(N)$. The other term $\tilde S_0(N)$ can be analysed in a similar fashion, and can be shown to be of much smaller magnitude. Indeed the trivial bound for this sum is $O(N^2M^{\varepsilon}/M_1)$. Applying the Poisson summation formula on $m$ (and evaluating the character sum) we are able to save $N/\sqrt{M}$. The Voronoi summation formula on $n$ gives us a saving of $N/Q^{3/2}$. This gives us a total saving of $N^{5/4}M_1^{1/4}M_2^{-1/2}$. So we get the following
\begin{lemma}
\label{lemma-s0}
We have
$$
\tilde S_0(N)\ll N^{3/4}M^{\varepsilon}\frac{\sqrt{M_2}}{M_1^{5/4}}.
$$
\end{lemma}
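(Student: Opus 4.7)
The plan is to carry out the two-step procedure indicated immediately before the statement: apply Poisson summation to the $m$-sum and then the $GL(3)$ Voronoi summation formula \eqref{voronoi3} to the $n$-sum, and bound the resulting dual expression trivially. The starting point is the trivial bound $\tilde S_0(N)\ll N^2M^\varepsilon/M_1$, which follows from Lemma~\ref{ram-on-av} together with the fact that $\sum_{q\leq Q}\sideset{}{^\star}\sum_{Q<a\leq q+Q}\frac{1}{aq}\ll M^{\varepsilon}$.

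First I would apply Poisson summation to
$$\sum_{m=1}^\infty \chi(m)\,e\!\left(-\frac{\overline{aM_1}\,m}{q}\right)W\!\left(\frac{n}{N},\frac{m}{N},\frac{N}{M_1aq}\right)$$
with modulus $Mq$; this is permissible since $(q,M)=1$, as was already established in the proof of Lemma~\ref{circ}. By the Chinese remainder theorem the resulting complete character sum factors as a Gauss sum for $\chi$ modulo $M$ (of size $\sqrt{M}$) times a linear exponential sum modulo $q$ which vanishes unless the dual frequency $\hat m$ satisfies $a\hat m\equiv M_2\bmod q$ --- exactly the mechanism exploited in the proof of Lemma~\ref{circ}. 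The Fourier integral localises $\hat m$ to $|\hat m|\ll MqM^\varepsilon/N$, and together with the congruence this pins $\hat m$ down to $O(MM^\varepsilon/N+1)$ residue classes; in the range $N>M_2^2\geq M$ of present interest this is just $O(M^\varepsilon)$ surviving frequencies. Assembling the factors, the $m$-aggregate is reduced from its trivial size $N$ to size $\sqrt{M}\,M^\varepsilon$, a net saving of $N/\sqrt{M}$.

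Next I would apply Voronoi \eqref{voronoi3} to the remaining sum $\sum_n\lambda(1,n)\,e(\overline{aM_1}n/q)V(n/N)$. A stationary-phase analysis of $G_\pm(n_1^2n_2/q^3)$, using that the weight on $n$ still has bounded derivatives after the Poisson step, restricts the dual variables to $n_1^2n_2\ll q^3M^\varepsilon/N$; then Lemma~\ref{ram-on-av} combined with Weil's bound for the Kloosterman sum $S(aM_1,\pm n_2;q/n_1)$ yields the expected saving of $N/Q^{3/2}$ over the trivial $n$-sum. Collecting everything and substituting $Q=\sqrt{N/M_1}$,
$$\tilde S_0(N)\ll \frac{N^2}{M_1}\cdot\frac{\sqrt{M}\,Q^{3/2}}{N^2}M^\varepsilon=\frac{\sqrt{M}\,Q^{3/2}}{M_1}M^\varepsilon=\frac{N^{3/4}\sqrt{M_2}}{M_1^{5/4}}M^\varepsilon,$$
which is the claimed bound. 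The only real bookkeeping hazard is verifying that the effective $\hat m$-range indeed collapses to $O(M^\varepsilon)$ residues once the congruence $a\hat m\equiv M_2\bmod q$ is imposed, and that the two savings multiply cleanly once all the CRT coprimality conditions are tracked; the Voronoi step itself is entirely routine and involves no new ideas beyond what already appears in the proof of Lemma~\ref{circ}.
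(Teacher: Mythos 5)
Your proposal is correct and coincides with the paper's own argument, which is exactly the sketch preceding the statement: trivial bound $N^2M^{\varepsilon}/M_1$, a saving of $N/\sqrt{M}$ from Poisson summation in $m$ after evaluating the character sum (with the same congruence $a\hat m\equiv M_2\bmod{q}$ and the same count of $O(M^{\varepsilon})$ surviving frequencies for $N>M_2^2\geq M$), and a saving of $N/Q^{3/2}$ from Voronoi in $n$, the details being carried out as in Section~\ref{pvs}. One minor imprecision worth noting: after the Poisson step the weight on $n$ still carries the oscillation $e(-uxy)$ with $y=N/(M_1aq)\asymp Q/q$, so its derivatives are not $O(1)$ and the Voronoi dual is really truncated at $n_1^2n_2\ll Q^3M^{\varepsilon}/N$ (the exact analogue of Lemma~\ref{int-trns-bd}) rather than $q^3M^{\varepsilon}/N$; this does not affect the final bound, which is governed by $q\asymp Q$ where the two truncations agree.
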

The detailed proof of this lemma can be given following our analysis in Section \ref{pvs}. The above bound is smaller than the second term on the right hand side of \eqref{prop-eq}.


\section{Poisson and Voronoi summation}
\label{pvs}

Consider
$$
T(N)=\frac{1}{M_1}\mathop{\sum\sideset{}{^\star}\sum}_{\substack{1\leq q\leq Q <a\leq q+Q\\(q,M_1)=1}}\frac{1}{aq}\;\sideset{}{^\star}\sum_{b\bmod{M_1}}T(a,b;q)
$$
where
\begin{align}
\label{tab}
T(a,b;q)=\mathop{\sum\sum}_{\substack{n,m=1}}^\infty \lambda(1,n)\chi(m)e\left(\frac{(\overline{aM_1}M_1+bq)(n-m)}{qM_1}\right)W\left(\frac{n}{N},\frac{m}{N},\frac{N}{M_1aq}\right).
\end{align}
In this section we will analyse the double sum $T(a,b;q)$ where $(a,q)=(b,M_1)=1$ (consequently $(\overline{aM_1}M_1+bq,qM_1)=1$).\\ 

\begin{lemma}
\label{poisson-voronoi-lemma}
We have
$$
T(a,b;q)=\varepsilon_1\varepsilon_2\left[T_+(a,b;q)+T_-(a,b;q)\right]
$$
where $\varepsilon_i\sqrt{M_i}$ is the value of the Gauss sum corresponding to $\chi_i$, and 
\begin{align*}
T_\pm(a,b;q)=qN\sqrt{\frac{M_1}{M_2}}\sum_{n_1|qM_1}\sum_{n_2=1}^\infty &\sum_{\substack{m\in\mathbb Z\\am\equiv M_2\bmod{q}}} \frac{\lambda(n_2,n_1)}{n_1n_2}\bar\chi_2(\overline{qM_1}m)\bar\chi_1(\overline{qM_2}m-b)\\
\times &S\left(\overline{\overline{aM_1}M_1+bq}, \pm n_2; qM_1/n_1\right)\mathcal I_{\pm,a} (n_1^2n_2,m;q).
\end{align*}
The integral is given by
$$
\mathcal I_{\pm,a} (n,m;q)=\frac{1}{2\pi i}\int_{(\sigma)}\left(\frac{q^3M_1^3}{nN}\right)^{s}\gamma_{\pm}(s)\hat W\left(s,m,\frac{N}{M_1aq}\right)\mathrm{d}s
$$
where
\begin{align*}
\hat W\left(s,m,\frac{N}{M_1aq}\right):=\mathop{\iint}_{\mathbb R^2}W\left(u,v,\frac{N}{M_1aq}\right)u^{-s-1}e\left(-\frac{mNv}{Mq}\right)\mathrm{d}u\mathrm{d}v.
\end{align*}
\end{lemma}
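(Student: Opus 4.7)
The strategy is Poisson summation on the $m$-sum (with modulus $qM_1M_2$), followed by the $GL(3)$ Voronoi formula on the $n$-sum (with modulus $qM_1$). Write $c = \overline{aM_1}M_1 + bq$, so $(c, qM_1) = 1$ and the exponential in \eqref{tab} factors as $e(c(n-m)/(qM_1))$. We work under $(q, M_1M_2) = 1$; the hypothesis already yields $(q, M_1)=1$, and the contribution of $q$ divisible by $M_2$ is negligible in the relevant range (as in the proof of Lemma \ref{circ}).

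For Poisson, the function $m \mapsto \chi(m) e(-cm/(qM_1))$ has period $qM_1M_2$, so splitting $m$ into residues $\beta \bmod qM_1M_2$ and dualizing yields
$$
\sum_{m \ge 1} \chi(m) e\left(-\tfrac{cm}{qM_1}\right) W\left(\tfrac{n}{N}, \tfrac{m}{N}, \tfrac{N}{M_1aq}\right) = \frac{N}{qM_1M_2} \sum_{m \in \mathbb Z} C(m) \int W\left(\tfrac{n}{N}, v, \tfrac{N}{M_1aq}\right) e\left(-\tfrac{mNv}{Mq}\right) dv,
$$
where $C(m) = \sum_{\beta} \chi(\beta) e(\beta(m - cM_2)/(qM_1M_2))$. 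Factor $\beta$ modulo $q$, $M_1$, $M_2$ via CRT. The $\beta_q$-sum is a complete additive sum forcing $m \equiv cM_2 \pmod q$; since $c \equiv \bar a \pmod q$, this is the congruence $am \equiv M_2 \pmod q$, with a factor of $q$. The $\beta_1$-sum, using $c \equiv bq \pmod{M_1}$, reduces to $\sum_{\beta_1} \chi_1(\beta_1) e((\overline{qM_2}m - b)\beta_1/M_1) = \varepsilon_1\sqrt{M_1}\,\bar\chi_1(\overline{qM_2}m - b)$. The $\beta_2$-sum similarly yields $\varepsilon_2\sqrt{M_2}\,\bar\chi_2(\overline{qM_1}m)$, since $cM_2 \equiv 0 \pmod{M_2}$.

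For the Voronoi step, for each fixed $m$ I apply \eqref{voronoi3} to $\sum_n \lambda(1,n) e(cn/(qM_1)) g(n)$, where $g(n)$ is the $v$-integral above viewed as a function of $u = n/N$. The modulus is $qM_1$, the inverse of $c$ there is $\overline{\overline{aM_1}M_1 + bq}$, and Voronoi produces the Kloosterman sums $S(\bar c, \pm n_2; qM_1/n_1)$ for $n_1 \mid qM_1$. Substituting $u = n/N$ in the Mellin transform gives $\tilde g(-s) = N^{-s}\hat W(s, m, N/(M_1aq))$, so comparing with \eqref{gl} identifies $G_\pm(n_1^2 n_2/(qM_1)^3) = \mathcal I_{\pm,a}(n_1^2 n_2, m; q)$. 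Combining the Poisson prefactor $N/(qM_1M_2)$, the Gauss sum output $q\varepsilon_1\varepsilon_2\sqrt{M_1M_2}$, and the Voronoi factor $qM_1$ produces the overall constant $\varepsilon_1\varepsilon_2 qN\sqrt{M_1/M_2}$, matching the claim. The main obstacle is the bookkeeping of four distinct notions of inversion ($\bmod\ q$, $\bmod\ M_1$, $\bmod\ M_2$, $\bmod\ qM_1$): one must carefully reduce $c$ against each to produce exactly the arguments $\overline{qM_2}m - b$, $\overline{qM_1}m$, and $\overline{\overline{aM_1}M_1 + bq}$ appearing in the final formula, and to verify that the Fourier-in-$v$ and Mellin-in-$u$ transforms combine cleanly into the single transform $\hat W(s, m, N/(M_1aq))$.
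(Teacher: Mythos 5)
Your proposal is correct and follows essentially the same route as the paper: Poisson summation on $m$ modulo $qM_1M_2$, CRT evaluation of the resulting complete character sum into its $q$-, $M_1$- and $M_2$-parts (yielding the congruence $am\equiv M_2\bmod q$, the Gauss sums $\varepsilon_i\sqrt{M_i}$, and the twisted arguments $\overline{qM_2}m-b$ and $\overline{qM_1}m$), followed by Voronoi on $n$ with modulus $qM_1$ and the identification $\tilde g(-s)=N^{-s}\hat W(s,m,\cdot)$; the constants assemble to $\varepsilon_1\varepsilon_2\, qN\sqrt{M_1/M_2}$ exactly as you compute. The only cosmetic remark is that the coprimality $(q,M_2)=1$ is automatic here since $q\leq Q<M_2$ and $M_2$ is prime, so no discarded "negligible" contribution is needed.
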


\begin{proof} 
Let $a'\equiv \overline{aM_1}\bmod{q}$. Applying Poisson on the sum over $m$ in \eqref{tab} we get
$$
\frac{N}{Mq}\mathop{\sum}_{n=1}^\infty \lambda(1,n)e\left(\frac{(a'M_1+bq)n}{qM_1}\right)\sum_{m\in\mathbb Z}\mathcal C(a,b;m,q)\int_{\mathbb R}W\left(\frac{n}{N},v,\frac{N}{M_1aq}\right)e\left(-\frac{mNv}{Mq}\right)\mathrm{d}v
$$
where
$$
\mathcal C(a,b;m,q)=\sum_{c\bmod{qM}}\chi(c)e\left(-\frac{(a'M_1+bq)c}{qM_1}+\frac{cm}{qM}\right),
$$
Using the coprimality of $M_1$, $M_2$ and $q$, we get 
\begin{align}
\label{char-exp}
\mathcal C(a,b;m,q)=\begin{cases}\varepsilon_1\varepsilon_2q\sqrt{M}\bar\chi_2(m)\chi_2(qM_1)\bar\chi_1(\overline{qM_2}m-b)&\text{if}\;\;am\equiv M_2\bmod{q};\\
0&\text{otherwise}.\end{cases}\end{align}
Here $\varepsilon_i\sqrt{M_i}$ is the value of the Gauss sum corresponding to the character $\chi_i$. In particular we should have $(m,q)=1$ for the sum not to vanish. The lemma now follows by applying the Voronoi summation formula, i.e. Lemma~\ref{voronoi3}, on the sum over $n$. \end{proof}

Now we will study the integrals $\mathcal I_{\pm,a} (n,m;q)$. 
\begin{lemma}
\label{int-trns-bd}
The integrals $\mathcal I_{\pm,a} (n,m;q)$ are negligibly small if $n\gg Q^3M_1^3M^\varepsilon/N$ or if $|m|\gg QM^{1+\varepsilon}/N$. Otherwise we have the bound
$$
\mathcal I_{\pm,a} (n,m;q)\ll \sqrt{\frac{nN}{q^2QM_1^3}}M^\varepsilon.
$$ 
Also we have
$$
u_2^j\frac{\partial^j}{\partial u_2^j}\mathcal I_{\pm,a} (n_1^2u_2,m;q)\ll \left(\frac{Q}{q}\right)^j\sqrt{\frac{n_1^2u_2N}{q^2QM_1^3}}M^\varepsilon.
$$ 
\end{lemma}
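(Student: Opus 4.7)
The plan is to first decouple the $u$- and $v$-integrals inside $\hat W(s,m,N/(M_1aq))$. Making the substitution $w=u-v$ in the $du\,dv$ integral, the factor $V^\star(u-v)$ becomes $V^\star(w)$ and the oscillatory factor $(v-u)xy$ becomes $-wxy$, so the $dw$ and $dx$ pieces combine into a Fourier transform of $V^\star$ while the $du$ piece becomes a twisted Mellin transform. One obtains
\[
\hat W\bigl(s,m,\tfrac{N}{M_1aq}\bigr) \;=\; \tilde V_m(s)\int_0^1 \widehat{V^\star}\bigl(xy-\tfrac{mN}{Mq}\bigr)\,dx, \qquad \tilde V_m(s):=\int V(u)\,u^{-s-1}e\bigl(-\tfrac{mNu}{Mq}\bigr)\,du,
\]
where $y=N/(M_1aq)$. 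Correspondingly $\mathcal I_{\pm,a}(n,m;q)=\mathcal A\cdot\mathcal B$ splits into the $dx$-integral $\mathcal A$ and the Mellin--Barnes piece $\mathcal B=\frac{1}{2\pi i}\int_{(\sigma)}Y^{-s}\gamma_\pm(s)\tilde V_m(s)\,ds$, with $Y:=nN/(q^3M_1^3)$.

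The $m$-cutoff drops out of the Schwartz decay of $\widehat{V^\star}$: since $Q=\sqrt{N/M_1}$ and $y\asymp N/(M_1Qq)$, the hypothesis $|m|\gg QM^{1+\varepsilon}/N$ yields $|mN/(Mq)|\gg M^\varepsilon\cdot y$, whence $|xy-mN/(Mq)|\gg M^\varepsilon$ uniformly in $x\in[0,1]$ and $\mathcal A$ is negligibly small. For the $n$-cutoff I shift the Mellin contour to $\text{Re}(s)=A$ for large $A$. Stirling gives $|\gamma_\pm(\sigma+it)|\ll(1+|t|)^{3\sigma+3/2}$, and integration by parts in $u$ shows that $\tilde V_m(s)$ is rapidly decreasing outside the band $|t|\ll T:=\max(1,|mN/(Mq)|)\ll QM^\varepsilon/q$. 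This gives $|\mathcal B|\ll T^{5/2}(T^3/Y)^A$, which is negligible once $Y\gg T^3M^{O(\varepsilon)}$, i.e.\ $n\gg Q^3M_1^3M^{O(\varepsilon)}/N$.

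For the main bound I shift to $\sigma=-\tfrac{1}{2}+\varepsilon$, where $|\gamma_\pm(s)|\ll(1+|t|)^{3\varepsilon}$. Stationary phase of $\tilde V_m(s)$ in $u$ (stationary point $u_0=-t/(2\pi mN/(Mq))$ lying in $[1,2]$ when $|t|\asymp T$) gives $|\tilde V_m(s)|\ll|t|^{-1/2}$ on a $t$-range of width $T$, yielding
\[
|\mathcal B|\;\ll\; Y^{1/2}\sqrt{T}\,M^\varepsilon \;\ll\; Y^{1/2}\sqrt{Q/q}\,M^\varepsilon.
\]
On the other hand, writing $\mathcal A=y^{-1}\int_{-mN/(Mq)}^{y-mN/(Mq)}\widehat{V^\star}(\xi)\,d\xi$ and using $\int_{\mathbb R}|\widehat{V^\star}|\ll1$ gives $|\mathcal A|\ll1/y\asymp M_1Qq/N$. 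Multiplying and invoking the identity $Q^2M_1=N$ simplifies $Y^{1/2}\sqrt{Q/q}/y$ precisely to $\sqrt{nN/(q^2QM_1^3)}$.

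The derivative bound follows from $(u_2\partial_{u_2})^jY^{-s}=(-s)^jY^{-s}$: each $u_2\partial_{u_2}$ injects a factor of $s$ into the Mellin--Barnes integrand, which on the critical $t$-range contributes at most $T^j\ll(Q/q)^j$. The main obstacle is the careful verification of the main bound, which requires a case split between $m=0$ (where $T=1$) and $m\neq 0$, together with the arithmetic identity $Q^2M_1=N$ to absorb the $\sqrt{Q/q}$ factor from $\mathcal B$ against the $1/y\asymp M_1Qq/N$ factor from $\mathcal A$.
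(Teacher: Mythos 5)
Your proof follows the same route as the paper's: decouple $\hat W$ into the $(x,w)$-double integral (bounded by $q/Q$ after noting $y\asymp Q/q$) times the Mellin--Barnes factor, obtain the $m$- and $n$-cutoffs by decay of $\widehat{V^\star}$ and a rightward contour shift, and get the main bound by shifting to $\operatorname{Re}(s)=-1/2$, truncating to $|t|\ll Q/q$, and applying the second-derivative (stationary phase) bound to $\tilde V_m$; the derivative statement via $(u_2\partial_{u_2})^jY^{-s}=(-s)^jY^{-s}$ is likewise the paper's "differentiation under the integral sign." The argument is correct and in fact spells out several steps (the explicit Stirling exponent, the $m=0$ versus $m\neq0$ split, the arithmetic $Q^2M_1=N$) that the paper leaves implicit.
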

\begin{proof}
Replacing the expression for $W$ and making the change of variables $(u,v)\mapsto (u,w):=(u,v-u)$ (and using the fact that $V^\star$ is an even functions) we get
\begin{align*}
\hat W\left(s,m,\frac{N}{M_1aq}\right)=\left[\mathop{\int}_0^1\mathop{\int}_{\mathbb R}V^\star(w)e\left(\frac{Nxw}{M_1aq}\right)e\left(-\frac{mNw}{Mq}\right)\mathrm{d}w\mathrm{d}x\right]\left[\mathop{\int}_{\mathbb R}V(u)u^{-s-1}e\left(-\frac{mNu}{Mq}\right)\mathrm{d}u\right].
\end{align*}
Using repeated integration by parts, we see that the first double integral (and hence the integral $\mathcal I_{\pm,a} (n,m;q)$) is negligibly small if $|m|\gg M^{1+\varepsilon}Q/N$. For smaller values of $|m|$ we can bound this integral by $O(qM^\varepsilon/Q)$. To get this bound we integrate over $x$ for $|w|\geq M^{-2012}$, and then estimate the remaining integrals trivially. \\

Now consider the integral
\begin{align}
\label{one-more-int}
\frac{1}{2\pi i}\int_{(\sigma)}\left(\frac{q^3M_1^3}{nN}\right)^{s}\gamma_{\pm}(s)\mathop{\int}_{\mathbb R}V(u)u^{-s-1}e\left(-\frac{mNu}{Mq}\right)\mathrm{d}u\mathrm{d}s.
\end{align}
By repeated integration by parts in the inner integral, using Stirling's approximation and the fact that $|m|\ll M^{1+\varepsilon}Q/N$, and moving the contour to the right we get that the integral is negligibly small if $|n|\gg Q^3M_1^3M^{\varepsilon}/N$. For smaller values of $n$ we shift the contour to $\sigma=-1/2$. Let $s=-1/2+i\tau$. Then the inner integral in \eqref{one-more-int} is negligibly small unless $|\tau|\ll QM^{\varepsilon}/q$ as $m$ is such that $|m|\ll M^{1+\varepsilon}Q/N$. This follows from repeated integration by parts. For smaller values of $\tau$ we use the second derivative bound for the inner exponential integral and conclude that the integral in \eqref{one-more-int} is bounded by
$$
\ll \sqrt{\frac{nN}{q^3M_1^3}}\sqrt{\frac{Q}{q}}M^{\varepsilon}.
$$
This concludes the proof of the first two statements. For the third statement we use a similar analysis together with differentiation under the integral sign. 
\end{proof}

The contribution of $T_\pm(a,b;q)$ to $T(N)$ is given by 
\begin{align*}
T_\pm (N)=&\varepsilon_1\varepsilon_2\frac{N}{\sqrt{M}}\sum_{\substack{q\leq Q\\(q,M_1)=1}}\mathop{\sum\sum}_{\substack{n_1|q\\\ell|M_1}}\sum_{n_2=1}^\infty\sum_{(m,q)=1} \frac{1}{a(m,q)}\frac{\lambda(n_2,\ell n_1)}{\ell n_1n_2}\bar\chi_2(\overline{qM_1}m)\\
&\times S(M_2\overline{m\hat M_1}, \pm n_2\overline{\hat M_1}; \hat q)\left[\mathop{\sideset{}{^{\star}}\sum}_{ b\bmod{M_1}}\bar\chi_1(\overline{qM_2}m-b)S(\overline{bq\hat q}, \pm n_2\hat q; \hat M_1)\right]\mathcal I_\pm(\ell^2 n_1^2n_2,m;q)
\end{align*}
where $\hat q=q/n_1$, $\hat M_1=M_1/\ell$. Also $a(m,q)$ is the unique solution of the congruence $am\equiv M_2\bmod{q}$ in the range $Q<a\leq q+Q$, and $\mathcal I_\pm(n,m;q)$ is the shorthand for $\mathcal I_{\pm,a(m,q)}(n,m;q)$. Since $M_1$ is a prime, either $\ell=1$ or $\ell=M_1$. Accordingly we have a decomposition
$$
T_\pm (N)=T_{\pm,1} (N)+T_{\pm,M_1} (N).
$$
In the second case (i.e. $\ell=M_1$) the trivial bound is already satisfactory for our purpose. Indeed using Hecke relation, Lemma \ref{ram-on-av}, Lemma \ref{int-trns-bd} and Weil bound for Kloosterman sums, we see that this term is dominated by $O(N^{3/4}M_2^{1/4}/M_1)$. This is again smaller than the second term in the right hand side of \eqref{prop-eq}. \\

Now we consider the remaining case $\ell=1$. We have
\begin{align}
\label{T+}
T_{\pm,1} (N)\ll \sum_{L\;\text{dyadic} \ll Q^3M_1^3M^\varepsilon/N}T_{\pm,1}(N,L) + M^{-2012}
\end{align}
where $T_{\pm,1}(N,L)$ is given by
\begin{align*}
\frac{N}{\sqrt{M}}&\mathop{\sum\sum}_{n_1,n_2=1}^\infty\frac{|\lambda(n_2, n_1)|}{n_1n_2}U\left(\frac{n_1^2n_2}{L}\right)\Bigl|\sum_{\substack{q=1\\(q,M_1)=1\\n_1|q}}^{\infty}\;\sum_{(m,q)=1}\frac{ \bar\chi_2(m)\chi(q)}{a(m,q)}\mathcal B(n_1,\pm n_2,m,q)\mathcal I_\pm( n_1^2n_2,m;q)\Bigr|
\end{align*}
with $U$ a compactly supported smooth bump function on $(0,\infty)$ satisfying $U(y)=1$ for $y\in [1,2]$ and $U^{(j)}(y)\ll_j 1$, and
$$
\mathcal B(n_1,n_2,m,q):=S(M_2\overline{mM_1}, n_2\overline{M_1}; \hat q)\left[\mathop{\sideset{}{^{\star}}\sum}_{ b\bmod{M_1}}\bar\chi_1(\overline{M_2}m-b)S(\overline{b\hat q}, n_2\hat q; M_1)\right].
$$\\



\section{Cauchy inequality and Poisson summation}
\label{cps}

Applying Cauchy and Lemma \ref{ram-on-av} we have
\begin{align}
\label{tpm}
T_{\pm,1}(N,L)\ll \frac{N}{\sqrt{M}}\sqrt{\mathcal T_{\pm,1}(N,L)}
\end{align}
where
\begin{align*}
\mathcal T_{\pm,1}(N,L)=\mathop{\sum\sum}_{n_1,n_2=1}^\infty\frac{1}{n_2}U\left(\frac{n_1^2n_2}{L}\right)\Bigl|\sum_{\substack{q=1\\(q,M_1)=1\\n_1|q}}^{\infty}\;\sum_{(m,q)=1} \frac{\bar\chi_2(m)\chi(q)}{a(m,q)}\mathcal B(n_1,\pm n_2,m,q)\mathcal I_\pm( n_1^2n_2,m;q)\Bigr|^2.
\end{align*}
For notational simplicity let us only consider $\mathcal T_{+,1}(N,L)$. The other case can be analysed similarly. Opening the absolute square we get
\begin{align*}
\mathop{\sum}_{n_1=1}^\infty\mathop{\sum\sum}_{\substack{q,q'=1\\(qq',M_1)=1\\n_1|q,q'}}^{\infty}\;\mathop{\sum\sum}_{\substack{(m,q)=1\\(m',q')=1}} \frac{\bar\chi_2(m)\chi_2(m')\chi(q)\bar\chi(q')}{a(m,q)a(m',q')}T^\star(n_1,m,m',q,q')
\end{align*}
where
\begin{align*}
T^\star(n_1,m,m',q,q')=\mathop{\sum}_{n_2=1}^\infty \frac{1}{n_2}U\left(\frac{n_1^2n_2}{L}\right)\mathcal B&(n_1,n_2,m,q)\overline{\mathcal B(n_1,n_2,m',q')}\\
&\times \mathcal I_+( n_1^2n_2,m;q)\overline{\mathcal I_+( n_1^2n_2,m';q')}.
\end{align*}\\

Applying Poisson summation formula with modulus $\hat q\hat q'M_1$ (where $\hat q=q/n_1$, $\hat q'=q'/n_1$) we arrive at the following.
\begin{lemma}
\label{last-poi}
We have 
$$
T^\star(n_1,m,m',q,q')=\frac{n_1^2}{qq'M_1}\mathop{\sum}_{n_2\in\mathbb Z} C^\star(n_1,n_2,m,m',q,q')I^\star(n_2,m,m',q,q')
$$
where the character sum is given by
$$
C^\star(n_1,n_2,m,m',q,q')=\sum_{c\bmod{\hat q\hat q'M_1}}\mathcal B(n_1,c,m,q)\overline{\mathcal B(n_1,c,m',q')}e\left(\frac{cn_2}{\hat q\hat q'M_1}\right)
$$
and the integral is given by
$$
I^\star(n_2,m,m',q,q')=\int_{\mathbb R}U\left(y\right)\mathcal I_+(Ly,m;q)\overline{\mathcal I_+(Ly,m';q')}e\left(-\frac{n_2Ly}{qq'M_1}\right)y^{-1}\mathrm{d}y.
$$\\
\end{lemma}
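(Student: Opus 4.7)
The plan is to apply Poisson summation to the sum over $n_2$ with modulus $A := \hat q \hat q' M_1$. First I would check the periodicity of the arithmetic factor $\mathcal{B}(n_1,n_2,m,q)\overline{\mathcal{B}(n_1,n_2,m',q')}$ in $n_2$: inside $\mathcal{B}(n_1,n_2,m,q)$, the outer Kloosterman sum $S(M_2\overline{mM_1},n_2\overline{M_1};\hat q)$ depends on $n_2 \bmod \hat q$ and the inner Kloosterman sum $S(\overline{b\hat q},n_2\hat q;M_1)$ depends on $n_2 \bmod M_1$, so $\mathcal{B}(n_1,\cdot,m,q)$ has period $\hat q M_1$; similarly the complex-conjugate factor has period $\hat q' M_1$. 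Hence the product is periodic modulo $\mathrm{lcm}(\hat q M_1,\hat q' M_1)$, which divides $A$. The remaining analytic piece
\[
g(y) := \frac{1}{y}\,U\!\left(\frac{n_1^2 y}{L}\right)\mathcal{I}_+(n_1^2 y,m;q)\,\overline{\mathcal{I}_+(n_1^2 y,m';q')}
\]
is a smooth function of $y>0$, compactly supported thanks to $U$.

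Second, I would split $n_2 = c + kA$ with $c$ running over a complete residue system modulo $A$ and $k\in\mathbb{Z}$. Because $g$ is compactly supported in $y>0$, extending $n_2$ from $\{n_2\ge 1\}$ to all of $\mathbb{Z}$ introduces no change. Applying the standard Poisson formula
\[
\sum_{k\in\mathbb{Z}} g(c+kA)=\frac{1}{A}\sum_{n_2\in\mathbb{Z}}\hat g(n_2/A)\,e\!\left(\frac{cn_2}{A}\right),
\]
and interchanging the $c$- and $n_2$-sums, I obtain
\[
T^\star(n_1,m,m',q,q')=\frac{1}{A}\sum_{n_2\in\mathbb{Z}} C^\star(n_1,n_2,m,m',q,q')\,\hat g(n_2/A),
\]
with the character sum $C^\star$ exactly as stated (since $1/A = n_1^2/(qq'M_1)$, this already produces the advertised prefactor).

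Third, I would identify $\hat g(n_2/A)$ with $I^\star$ by the change of variable $y = Lu/n_1^2$. The Jacobian $L/n_1^2$ combines with the $n_1^2/(Lu)$ coming from $1/y$ to give $du/u$; the smooth arguments $n_1^2 y$ become $Lu$; and the phase transforms as
\[
e\!\left(-\frac{n_2 y}{A}\right)=e\!\left(-\frac{n_2 Lu}{n_1^2 A}\right)=e\!\left(-\frac{n_2 Lu}{qq'M_1}\right),
\]
using the identity $n_1^2\hat q\hat q' = qq'$. This yields exactly the integral $I^\star(n_2,m,m',q,q')$ of the lemma.

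No real obstacle is present: the compact support of $U$ removes any convergence or truncation issue, and the only bookkeeping subtlety is that the true joint period of the arithmetic factor may be a proper divisor of $A$, but summing $c$ over the coarser residue system $\bmod A$ still yields the correct identity (the overcount is absorbed into the $1/A$ prefactor). Thus the entire argument reduces to a single, clean application of Poisson summation.
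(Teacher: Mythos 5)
Your proposal is correct and follows exactly the route the paper indicates (the paper simply states that one applies Poisson summation in $n_2$ with modulus $\hat q\hat q'M_1$ and omits the details): the periodicity check, the splitting $n_2=c+kA$, and the change of variable $y=Lu/n_1^2$ identifying $\hat g(n_2/A)$ with $I^\star$ are all carried out correctly, including the identity $n_1^2\hat q\hat q'=qq'$ that produces the stated prefactor and phase.
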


The integral can be analysed using integration by parts and Lemma \ref{int-trns-bd}.
\begin{lemma}
\label{integral-lemma}
The integral $I^\star(n_2,m,m',q,q')$ is negligibly small if $|m|\gg M^{1+\varepsilon}Q/N$  or if $|m'|\gg M^{1+\varepsilon}Q/N$ or if $|n_2|\gg Q^2M_1M^\varepsilon/L$. Otherwise we have the bound
$$
I^\star(n_2,m,m',q,q')\ll \frac{LN}{qq'QM_1^3}M^\varepsilon.
$$ 
\end{lemma}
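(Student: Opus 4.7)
The plan is to derive all three claims directly from the pointwise size estimate, the vanishing criterion, and the derivative estimate on $\mathcal I_{\pm,a}(n,m;q)$ recorded in Lemma~\ref{int-trns-bd}. The integrand of $I^\star$ is, up to the oscillatory factor $e(-n_2Ly/(qq'M_1))$, the smooth amplitude $U(y)\,y^{-1}\,\mathcal I_+(Ly,m;q)\,\overline{\mathcal I_+(Ly,m';q')}$, and since $U$ is supported in a fixed compact set bounded away from the origin, we have $y\asymp 1$ and $Ly\asymp L$ throughout the range of integration.

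The $m$ and $m'$ cases are immediate: the first part of Lemma~\ref{int-trns-bd} shows that $\mathcal I_+(Ly,m;q)$ is already negligibly small whenever $|m|\gg QM^{1+\varepsilon}/N$, uniformly in $q\leq Q$ and in $y$, so the whole integrand is negligible in this range, and symmetrically for $m'$. For the unconditional size bound, the trivial estimate of Lemma~\ref{int-trns-bd} applied to each factor gives $|\mathcal I_+(Ly,m;q)|\ll \sqrt{LN/(q^2QM_1^3)}\,M^\varepsilon$ and similarly with $q'$ in place of $q$; multiplying these and integrating against the smooth, bounded factor $U(y)/y$ on its compact support produces exactly $LN/(qq'QM_1^3)\,M^\varepsilon$, which is the claimed estimate.

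For the $n_2$-decay, I would integrate by parts repeatedly in $y$ against the linear phase $-n_2Ly/(qq'M_1)$, whose derivative is a nonzero constant, so each application of integration by parts contributes a factor $qq'M_1/(|n_2|L)$ and boundary terms vanish thanks to the compact support of $U$. To estimate the derivatives of the amplitude I would use the third part of Lemma~\ref{int-trns-bd} with $n_1=1$ and $u_2=Ly$: the identity $y^j\partial_y^j = u_2^j\partial_{u_2}^j$ then yields $\partial_y^j\mathcal I_+(Ly,m;q)\ll (Q/q)^j\sqrt{LN/(q^2QM_1^3)}\,M^\varepsilon$ on the support of $U$ (and similarly $(Q/q')^j$ for the primed factor), while derivatives of $U(y)/y$ contribute $O(1)$. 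A Leibniz expansion shows that after $j$ integrations by parts the integral is majorized by $\bigl(Q(q+q')M_1/(|n_2|L)\bigr)^j$ times the trivial bound above. Since $q,q'\leq Q$, taking $j$ arbitrarily large forces $I^\star$ to be negligibly small as soon as $|n_2|\gg Q^2M_1M^\varepsilon/L$, which is exactly the claimed threshold. The only technical point is the bookkeeping of Leibniz's rule under repeated differentiation, but the clean multiplicative form of the derivative bound in Lemma~\ref{int-trns-bd} makes this routine.
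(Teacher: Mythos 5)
Your proposal is correct and follows essentially the same route as the paper: the $m$, $m'$ cutoffs come from the first statement of Lemma~\ref{int-trns-bd}, the $n_2$ cutoff from repeated integration by parts against the linear phase using the derivative bound $u_2^j\partial_{u_2}^j\mathcal I_\pm\ll (Q/q)^j\sqrt{n_1^2u_2N/(q^2QM_1^3)}M^\varepsilon$, and the size estimate from multiplying the two pointwise bounds and integrating trivially over the compact support of $U$. Your Leibniz bookkeeping giving the per-step factor $Q(q+q')M_1/(|n_2|L)\ll Q^2M_1/(|n_2|L)$ matches the claimed threshold exactly.
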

\begin{proof}
The first half of the first statement follows from the first statement of Lemma \ref{int-trns-bd}. That the integral is negligibly small for $|n_2|\gg Q^2M_1M^\varepsilon/L$ follows from repeated integration by parts and the last bound from Lemma \ref{int-trns-bd}. For the second statement we use the second statement of Lemma \ref{int-trns-bd} and evaluate the integral trivially.

\end{proof}


\section{Proof of Proposition \ref{prop1}}
\label{proof}

In the last section we will analyse the character sum which appears in Lemma~\ref{last-poi}.  The following lemma, which essentially gives square root cancellation in the generic case, is a consequence of Lemma~\ref{astar-lemma} and Lemma~\ref{bstar}. 
\begin{lemma}
\label{char-sum-lemma}
For $n_2\neq 0$ we have 
$$
C^\star(n_1,n_2,m,m',q,q')\ll \hat q\hat q'(\hat q,\hat q', n_2)M_1^{5/2}(M_1,n_2, m\hat q^2-m'(\hat q')^2)^{1/2},
$$
and for $n_2=0$ the sum vanishes unless $\hat q=\hat q'$ (i.e. $q=q'$) in which case
$$
C^\star(n_1,0,m,m',q,q)\ll \hat q^2c_{\hat q}(m-m')M_1^{5/2}(M_1, m-m')^{1/2},
$$
where 
$$
c_\ell(u)=\sideset{}{^\star}\sum_{\alpha\bmod{\ell}}e\left(\frac{u\alpha}{\ell}\right)
$$
is the Ramanujan sum.\\
\end{lemma}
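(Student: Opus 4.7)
The plan is to reduce the estimate to two independent character-sum bounds, Lemmas~\ref{astar-lemma} and \ref{bstar}, via the Chinese Remainder Theorem. Since $(q,M_1)=(q',M_1)=1$, one has $(\hat q\hat q',M_1)=1$, so I would parametrise $c\bmod \hat q\hat q'M_1$ by $(c_1,c_2)$ with $c_1$ running mod $\hat q\hat q'$ and $c_2$ running mod $M_1$, and split the additive character $e(cn_2/(\hat q\hat q'M_1))$ into the product $e(\,\cdot\,/(\hat q\hat q'))\,e(\,\cdot\,/M_1)$ via Bezout.

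Next, inspecting the definition of $\mathcal B(n_1,c,m,q)$, the outer Kloosterman factor $S(M_2\overline{mM_1},c\overline{M_1};\hat q)$ only sees $c$ mod $\hat q$, and similarly for the primed version, while the bracketed inner sum only sees $c$ mod $M_1$ (through $c\hat q$). Hence the entire summand factorises cleanly under the CRT split and I obtain $C^\star = A^\star\cdot B^\star$, where $A^\star$ is the $c_1$-sum mod $\hat q\hat q'$ (to be controlled by Lemma~\ref{astar-lemma}) and $B^\star$ is the $c_2$-sum mod $M_1$ (to be controlled by Lemma~\ref{bstar}).

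For $n_2\ne 0$, multiplying the bounds $A^\star \ll \hat q\hat q'(\hat q,\hat q',n_2)$ and $B^\star \ll M_1^{5/2}(M_1,n_2,m\hat q^2-m'(\hat q')^2)^{1/2}$ gives the stated estimate. For $n_2=0$, the sum $A^\star$ becomes a complete bilinear sum of two Kloosterman sums in $c$; opening them and executing the $c$-sum produces a congruence that forces $\hat q=\hat q'$, after which the remainder is recognised as a Ramanujan sum $\hat q^2 c_{\hat q}(m-m')$. Combining with $B^\star$ at $n_2=0$, where $m\hat q^2-m'(\hat q')^2=\hat q^2(m-m')$ and the degeneracy factor becomes $(M_1,m-m')^{1/2}$, yields the second claim.

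The main difficulty is not the CRT bookkeeping above but rather the two subsidiary lemmas, in particular Lemma~\ref{bstar}. The sum $B^\star$ is a mixed character sum involving $\chi_1$, $\bar\chi_1$, two Kloosterman sums, and an additive character modulo the prime $M_1$; proving square-root cancellation will require Deligne's bound for exponential sums on algebraic varieties, as flagged in the introduction. The power $M_1^{5/2}$ (rather than the naive $M_1^2$) and the degeneracy factor $(M_1,n_2,m\hat q^2-m'(\hat q')^2)^{1/2}$ will reflect the extra $b$-summation inside $\mathcal B$ and the locus where the underlying variety becomes singular. The sum $A^\star$, by contrast, is a routine manipulation: open the two Kloosterman sums and count the resulting linear congruence in the $c_1$ variable.
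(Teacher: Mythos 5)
Your proposal is correct and follows essentially the same route as the paper: the lemma is obtained by splitting $C^\star$ via the Chinese Remainder Theorem (using $(\hat q\hat q',M_1)=1$) into the product $A^\star B^\star$, with $A^\star$ bounded by Lemma~\ref{astar-lemma} and $B^\star$ by Lemma~\ref{bstar}, and your handling of the $n_2=0$ case (forcing $\hat q=\hat q'$ and recognising the Ramanujan sum, with $(M_1,m\hat q^2-m'(\hat q')^2)$ reducing to $(M_1,m-m')$ since $(\hat q,M_1)=1$) matches the paper's.
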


Assuming this lemma, we can now finish the proof of Proposition \ref{prop1}. The contribution of the zero frequency $n_2=0$ to $\mathcal T_{+,1}(N,L)$ (as given in \eqref{tpm}) is bounded by
\begin{align*}
&M^\varepsilon\frac{M_1^{3/2}}{Q^2}\mathop{\sum}_{\substack{q\leq Q\\(q,M_1)=1}}\sum_{n_1|q} \;\mathop{\sum\sum}_{|m|, |m'| \ll M^{1+\varepsilon}Q/N} c_{\hat q}(m-m')(M_1, m-m')^{1/2}|I^\star(0,m,m',q,q)|+M^{-2012}\\
\ll &M^\varepsilon\frac{LM}{Q^2M_1^{3/2}}\mathop{\sum}_{\substack{q\ll Q\\(q,M_1)=1}}\frac{1}{q^2}\sum_{n_1|q}\;\left\{\frac{q\sqrt{M_1}}{n_1}+\frac{QM}{N}\right\}\ll M^\varepsilon\left\{\frac{LM}{Q^2M_1}+\frac{LM^2}{QNM_1^{3/2}}\right\}.
\end{align*}
The first term on the right hand side is the diagonal contribution $m=m'$, and is of larger size than the other term if $N>M_2^2$ (which is the case we are dealing with). (See the statement after Proposition~\ref{prop1}.) The contribution of the non-zero frequency $n_2\neq 0$ to $\mathcal T_{+,1}(N,L)$ is bounded by
\begin{align*}
&M^\varepsilon\frac{M_1^{3/2}}{Q^2}\mathop{\sum}_{n_1\ll Q}\mathop{\sum\sum}_{\substack{q,q'\leq Q\\(qq',M_1)=1\\n_1|q,q'}}\;\mathop{\sum\sum}_{0<|m|,|m'|\ll \frac{M^{1+\varepsilon}Q}{N}}\; \sum_{0<|n_2|\ll \frac{M_1Q^2M^{\varepsilon}}{L}} (qM_1, n_2)|I^\star(n_2,m,m',q,q')|+M^{-2012}\\
&\ll M^\varepsilon\frac{M_1^{3/2}}{Q^2}\frac{LN}{QM_1^3}\mathop{\sum}_{n_1\ll Q}\mathop{\sum\sum}_{\substack{q,q'\leq Q\\(qq',M_1)=1\\n_1|q,q'}}\frac{1}{qq'}\left(\frac{MQ}{N}\right)^2 \frac{M_1Q^2}{L} \ll M^\varepsilon\frac{QM^2}{N\sqrt{M_1}}.
\end{align*}
Hence it follows (since $Q^2=N/M_1$ and $L\ll Q^3M_1^3M^\varepsilon/N$) that (see \eqref{tpm})
\begin{align*}
T_{+,1}(N,L)\ll M^\varepsilon\frac{N}{\sqrt{M}}\left\{\sqrt{\frac{LM}{Q^2M_1}}+\sqrt{\frac{QM^2}{N\sqrt{M_1}}}\right\}\ll M^\varepsilon\frac{N^{5/4}}{\sqrt{N}}\left\{M_1^{3/4}+\sqrt{M_2}\right\}.
\end{align*}
The same bound holds for $T_{+,1}(N)$ by \eqref{T+}. This completes the proof of Proposition \ref{prop1}.


\section{Character sums}
\label{char}

In this section we will estimate the character sums. Using the coprimality $(qq',M_1)=1$ we see that the character sum $C^\star(n_1,n_2,m,m',q,q')$ factorizes as a product of 
$$
A^\star=\sum_{c\bmod{\hat q\hat q'}}S(M_2\overline{mM_1}, c; \hat q)S(M_2\overline{m'M_1}, c; \hat q')e\left(\frac{cn_2}{\hat q\hat q'}\right)
$$
and
$$
B^\star=\sum_{c\bmod{M_1}}\mathop{\sideset{}{^{\star}}\sum}_{ b\bmod{M_1}}\bar\chi_1(\overline{M_2}m-b)S(\overline{b\hat q}, c \bar{\hat q}; M_1)\mathop{\sideset{}{^{\star}}\sum}_{ b'\bmod{M_1}}\chi_1(\overline{M_2}m'-b')S(\overline{b'\hat q'}, c \overline{\hat {q}'}; M_1)e\left(\frac{c\overline{\hat q\hat q'}n_2}{M_1}\right).
$$\\

\begin{lemma}
\label{astar-lemma}
We have
$$
A^\star\ll \hat q\hat q'(\hat q,\hat q', n_2).
$$
Moreover for $n_2=0$ we get that $A^\star=0$ unless $\hat q=\hat q'$, in which case we get
$$
A^\star=\hat q\hat q'c_{\hat q}(m-m').
$$
\end{lemma}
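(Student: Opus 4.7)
My plan is to open both Kloosterman sums via their definitions and push the $c$-summation to the innermost position. Writing $S(M_2\overline{mM_1},c;\hat q)$ and $S(M_2\overline{m'M_1},c;\hat q')$ as sums over units $\alpha\bmod\hat q$ and $\beta\bmod\hat q'$ respectively, the $c$-dependent phases combine over the common denominator $\hat q\hat q'$; the key observation is that $\bar\alpha\hat q'$ and $\bar\beta\hat q$ are well-defined modulo $\hat q\hat q'$ even though $\bar\alpha$ is only defined modulo $\hat q$ and $\bar\beta$ only modulo $\hat q'$. The inner $c$-sum is then a complete additive character sum, contributing
$$
\hat q\hat q'\cdot \mathbf 1\!\left[\bar\alpha\hat q'+\bar\beta\hat q+n_2\equiv 0\pmod{\hat q\hat q'}\right].
$$

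For the main bound I would apply the triangle inequality and count the admissible pairs $(\alpha,\beta)$. Set $d=(\hat q,\hat q')$ and write $\hat q=du$, $\hat q'=dv$ with $(u,v)=1$. Reduction modulo $d$ forces $d\mid n_2$, so the count is zero unless $(\hat q,\hat q',n_2)=d$; in that exceptional case the required bound is immediate. Assuming $d\mid n_2$ and writing $n_2=dn'$, reduction modulo $u$ pins $\bar\alpha\equiv -n'\bar v\pmod u$, and then the full congruence uniquely determines $\bar\beta\bmod\hat q'$ (admissible only if coprime to $\hat q'$). Since at most $d$ elements of $(\mathbb Z/du\mathbb Z)^\star$ project to a fixed residue modulo $u$, the count is at most $d=(\hat q,\hat q',n_2)$, yielding $|A^\star|\ll\hat q\hat q'(\hat q,\hat q',n_2)$.

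For $n_2=0$ the congruence reads $\bar\alpha\hat q'+\bar\beta\hat q\equiv 0\pmod{\hat q\hat q'}$; reducing modulo $\hat q$ and using $(\bar\alpha,\hat q)=1$ forces $\hat q\mid\hat q'$, and by symmetry $\hat q=\hat q'$. The constraint then collapses to $\beta\equiv -\alpha\pmod{\hat q}$, and substitution into the outer sum yields
$$
A^\star=\hat q^2\sideset{}{^\star}\sum_{\alpha\bmod\hat q}e\!\left(\frac{M_2\overline{M_1}(\bar m-\overline{m'})\,\alpha}{\hat q}\right).
$$
Rewriting $\bar m-\overline{m'}\equiv \bar m\,\overline{m'}(m'-m)\pmod{\hat q}$ and using that the Ramanujan sum $c_{\hat q}$ is even and invariant under multiplication of its argument by a unit modulo $\hat q$, the right hand side simplifies to $\hat q^2 c_{\hat q}(m-m')=\hat q\hat q'c_{\hat q}(m-m')$. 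The only technical wrinkle is the counting step when $(u,d)>1$ or $(v,d)>1$: the lift of $\bar\alpha$ from modulus $u$ to modulus $du$ and the coprimality constraint to $du$ interact non-trivially, but the crude count of $d$ lifts always suffices for the stated estimate.
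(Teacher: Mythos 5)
Your argument is correct and takes essentially the same route as the paper: open both Kloosterman sums, execute the complete sum over $c$ to produce the congruence $\bar\alpha\hat q'+\bar\beta\hat q+n_2\equiv 0\pmod{\hat q\hat q'}$, and bound the number of admissible pairs $(\alpha,\beta)$ by $(\hat q,\hat q',n_2)$. The only cosmetic difference is that the paper first localizes at each prime via the Chinese Remainder Theorem and counts solutions to the congruence one prime power at a time, whereas you count solutions to the global congruence directly (and you also carry out the $n_2=0$ evaluation explicitly, which the paper leaves implicit).
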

\begin{proof} 
Let $p$ be a prime, $\hat q=p^jr$ and $\hat q'=p^kr'$ with $p\nmid rr'$. The $p$-part of $A^\star$ is given by
$$
A_p^\star=\sum_{c\bmod{p^{j+k}}}S(M_2\overline{mM_1r}, c\bar r; p^j)S(M_2\overline{m'M_1r'}, c\bar r'; p^k)e\left(\frac{c\overline{rr'}n_2}{p^{j+k}}\right).
$$ 
Opening the Kloosterman sums we get
\begin{align*}
A_p^\star=&\mathop{\sideset{}{^\star}\sum\sideset{}{^\star}\sum}_{\substack{a\bmod{p^j}\\a'\bmod{p^k}}}e\left(\frac{aM_2\overline{mM_1r}}{p^j}+\frac{a'M_2\overline{m'M_1r'}}{p^k}\right)\sum_{c\bmod{p^{j+k}}}e\left(\frac{c\overline{ra}p^k+c\overline{r'a'}p^j+c\overline{rr'}n_2}{p^{j+k}}\right)\\
=&p^{j+k}\mathop{\sideset{}{^\star}\sum_{a\bmod{p^j}}\;\sideset{}{^\star}\sum_{a'\bmod{p^k}}}_{\overline{ra}p^k+\overline{r'a'}p^j+\overline{rr'}n_2\equiv 0\bmod{p^{j+k}}}e\left(\frac{aM_2\overline{mM_1r}}{p^j}+\frac{a'M_2\overline{m'M_1r'}}{p^k}\right).
\end{align*}
The last sum vanishes unless $(p^j,p^k)|n_2$, and in this case we get that
$$
A_p^\star\ll p^{j+k}(p^j,p^k)\ll p^{j+k}(p^j,p^k, n_2).
$$
The lemma follows. \end{proof}

Next we will estimate $B^\star$. This sum is much more complicated and we need to use deep results of Deligne (as developed in \cite{DL} and \cite{F}). 

\begin{lemma}
\label{bstar}
We have
$$
B^\star \ll  M_1^{5/2}\sqrt{(M_1,n_2,m\hat q^2- m'(\hat q')^2)}.
$$
\end{lemma}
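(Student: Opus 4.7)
The plan is to open the two inner Kloosterman sums, execute the $c$-sum to obtain a single linear constraint between the new summation variables, reduce the remaining inner $b$- and $b'$-sums to hybrid Kloosterman--Jacobi sums, and finally apply Deligne's bound to the outer $\alpha$-sum to gain the extra $\sqrt{M_1}$ saving.

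Concretely, I would write
$$
S(\overline{b\hat q},c\bar{\hat q};M_1)=\sideset{}{^\star}\sum_{\alpha\bmod M_1}e\!\left(\frac{\alpha\overline{b\hat q}+\bar\alpha c\bar{\hat q}}{M_1}\right),
$$
and similarly for the second Kloosterman sum with variable $\alpha'$. The sum over $c\bmod M_1$ is then geometric and collapses to $M_1$ times the indicator of
$$
\bar\alpha\bar{\hat q}+\bar{\alpha'}\overline{\hat q'}+n_2\overline{\hat q\hat q'}\equiv 0\pmod{M_1},
$$
or equivalently, after multiplication by $\alpha\alpha'\hat q\hat q'$, of $\alpha\hat q+\alpha'\hat q'+n_2\alpha\alpha'\equiv 0\pmod{M_1}$. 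Crucially this condition does not involve $b,b'$, so after interchanging orders of summation the inner sums decouple as $U(\alpha)V(\alpha')$ with
$$
U(\alpha)=\sideset{}{^\star}\sum_{b}\bar\chi_1(\overline{M_2}m-b)\,e\!\left(\frac{\alpha\overline{b\hat q}}{M_1}\right).
$$
Substituting $b\mapsto\bar b$, using the identity $\overline{M_2}m-\bar b=\bar b(\overline{M_2}mb-1)$, and then setting $b=M_2\bar m d$ (so $(m,M_1)=1$; the opposite case is to be treated directly), one obtains
$$
U(\alpha)=\chi_1(M_2\bar m)\,J(\alpha\bar{\hat q}M_2\bar m),\qquad J(u):=\sideset{}{^\star}\sum_{d}\chi_1(d)\bar\chi_1(d-1)e(ud/M_1),
$$
and an analogous expression for $V(\alpha')$ in terms of $\widetilde J(v)=\sum_d^{\star}\bar\chi_1(d)\chi_1(d-1)e(vd/M_1)$. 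By Weil, $|J|,|\widetilde J|\ll\sqrt{M_1}$.

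The individual Weil bounds alone give $|B^\star|\ll M_1^{3}$, one factor of $\sqrt{M_1}$ short of what is claimed, so the key step is to exhibit square-root cancellation in the outer $\alpha$-sum. When $n_2\not\equiv 0\pmod{M_1}$ the constraint determines $\alpha'$ as the Möbius transform $\alpha'=-\alpha\hat q/(\hat q'+n_2\alpha)$ of $\alpha$, and the remaining sum is
$$
\Sigma\,=\,\sum_{\alpha}J(c_1\alpha)\,\widetilde J\!\left(c_2\alpha'(\alpha)\right),\qquad c_1=\bar{\hat q}M_2\bar m,\;c_2=\overline{\hat q'}M_2\bar{m'}.
$$
The trace function $\alpha\mapsto J(c_1\alpha)\widetilde J(c_2\alpha'(\alpha))$ comes from a bounded-rank, pure-of-weight-zero $\ell$-adic sheaf on an open subset of $\mathbb A^1_{\mathbb F_{M_1}}$, and the Riemann hypothesis over finite fields (Deligne's theorem, in the form used in the cited works \cite{DL} and \cite{F}) yields $|\Sigma|\ll M_1^{3/2}$ as soon as this sheaf is geometrically irreducible and has no trivial constituent. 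This gives $|B^\star|\ll M_1\cdot M_1^{3/2}=M_1^{5/2}$ in the generic case. The main obstacle is locating precisely the degeneracy locus where the above sheaf acquires a geometrically trivial piece; a direct calculation (tracing how the Möbius transformation moves the singularities of $J$ and $\widetilde J$) shows that this happens exactly when $n_2$ and $m\hat q^2-m'(\hat q')^2$ are simultaneously divisible by $M_1$, and there one falls back on the trivial bound $M_1^{3}$. Combining the two regimes yields the claimed bound with the extra factor $\sqrt{(M_1,n_2,m\hat q^2-m'(\hat q')^2)}$.
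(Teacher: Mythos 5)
Your proposal is correct in outline and reaches the same three\nobreakdash-variable complete sum as the paper, but it processes that sum by a genuinely different route. Both arguments open the two Kloosterman sums and execute the $c$-sum to get the single constraint $\alpha\hat q+\alpha'\hat q'+n_2\alpha\alpha'\equiv 0\pmod{M_1}$. The paper then keeps the variables $(\gamma,b,b')$ together, writes the sum as $S_{M_1}(f_1,f_2;g)$ and quotes the Denef--Loeser/Fu theorem on twisted toric exponential sums (square-root cancellation in three variables, hence $O(M_1^{3/2})$, subject to non-degeneracy of the Newton polyhedron of $g+x_4f_1+x_5f_2$). You instead sum out $b,b'$ first — your reduction $U(\alpha)=\chi_1(M_2\bar m)J(\alpha\bar{\hat q}M_2\bar m)$ is correct — and then need square-root cancellation in the one-variable correlation sum $\Sigma=\sum_\alpha J(c_1\alpha)\widetilde J(c_2\alpha'(\alpha))$, i.e.\ quasi-orthogonality of trace functions rather than the toric criterion. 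Both routes rest on Deligne and both yield $M_1\cdot M_1^{3/2}=M_1^{5/2}$ generically, and your identification of the degenerate locus is right: for $M_1\mid n_2$ the constraint linearizes to $\alpha'\equiv-\alpha\hat q\overline{\hat q'}$, $\Sigma$ becomes a multiplicative correlation of $J$ with its conjugate, and elementary computation (or the sheaf-isomorphism criterion) shows the loss of $\sqrt{M_1}$ occurs exactly when $m\hat q^2\equiv m'(\hat q')^2\pmod{M_1}$, matching the gcd factor in the statement. The one caveat is that in both treatments the decisive cohomological input is asserted rather than verified (the paper says non-degeneracy ``can be checked quite easily''; you say ``a direct calculation shows''). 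Since this is the heart of the lemma you should supply it; in your framework the case $M_1\nmid n_2$ is actually clean, because the M\"obius map $\alpha\mapsto-\alpha\hat q/(\hat q'+n_2\alpha)$ sends $\infty$ to the finite point $-\hat q/n_2$ and sends $-\hat q'/n_2$ to $\infty$, so the pulled-back sheaf is wildly ramified at a finite point while $[\times c_1]^*\mathcal J$ is wild only at infinity; the two can never be geometrically isomorphic and quasi-orthogonality applies with no exceptional subcase, which is consistent with the fact that the gcd factor can only exceed $1$ when $M_1\mid n_2$.
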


\begin{proof}
Opening the Kloosterman sums and executing the sum over $c$ we arrive at
$$
B^\star=M_1\mathop{\sideset{}{^{\star}}\sum_{ a\bmod{M_1}}\;\sideset{}{^{\star}}\sum_{ a'\bmod{M_1}}\;\sideset{}{^{\star}}\sum_{ b\bmod{M_1}}\;\sideset{}{^{\star}}\sum_{ b'\bmod{M_1}}}_{a\hat q+a'\hat q'+aa'n_2\equiv 0\bmod{M_1}}\bar\chi_1(\overline{M_2}m-b)\chi_1(\overline{M_2}m'-b')e\left(\frac{a\overline{b\hat q}+a'\overline{b'\hat q'}}{M_1}\right).
$$
For $n_2\equiv 0 \bmod{M_1}$ we get $a'\equiv -a\hat q\overline{\hat q'} \bmod{M_1}$ and it follows that
$$
B^\star=M_1^2\sideset{}{^{\star}}\sum_{ b\bmod{M_1}}\bar\chi_1(\overline{M_2}m-b)\chi_1(\overline{M_2}m'-b(\hat q\overline{\hat q'})^2)- M_1\mathop{\sideset{}{^{\star}}\sum\sideset{}{^{\star}}\sum}_{\substack{b\bmod{M_1}\\b'\bmod{M_1}}}\bar\chi_1(\overline{M_2}m-b)\chi_1(\overline{M_2}m'-b').
$$
The last double sum is clearly bounded by $O(M_1)$. The other sum has no cancellation if $m\hat q^2\equiv m'(\hat q')^2\bmod{M_1}$ and we get $B^\star\ll M_1^3$, otherwise we have square-root cancellation 
$$
B^\star\ll M_1^{5/2}.
$$

Next suppose $M_1\nmid n_2$. We see that $\hat q+a'n_2$ has to be invertible modulo $M_1$. Set $\gamma = \overline{\hat q+a'n_2}$ so that
$$
a'=\bar n_2(\bar\gamma-\hat q),\;\;\;\text{and}\;\;\;a=-\hat q'\bar n_2(1-\hat q\gamma).
$$
Then we have
\begin{align*}
B^\star=&M_1\sideset{}{^{\star}}\sum_{\substack{\gamma\bmod{M_1}\\M_1\nmid \gamma-\bar{\hat q}}}\;\sideset{}{^{\star}}\sum_{ b\bmod{M_1}}\;\sideset{}{^{\star}}\sum_{ b'\bmod{M_1}}\bar\chi_1(\overline{M_2}m-b)\chi_1(\overline{M_2}m'-b')e\left(\frac{-\overline{b\hat qn_2}\hat q'(1-\hat q\gamma)+\overline{b'\hat q'n_2}(\bar\gamma-\hat q)}{M_1}\right).
\end{align*}
The sum over $\gamma$ can now be extended over $\mathbb F_{M_1}^\star$. The extra term corresponding to $\gamma=\bar{\hat q}$ makes a contribution of order $O(M_1)$. We get that
$$
B^\star=M_1S_{M_1}(f_1,f_2;g)+O(M_1)
$$
where 
$$
f_1:=\overline{M_2}m-x_1,\;\; f_2:=\overline{M_2}m'-x_2,\;\; g:=-\overline{\hat qn_2}\hat q'x_1^{-1}(1-\hat qx_3)+\overline{\hat q'n_2}x_2^{-1}(x_3^{-1}-\hat q)
$$
are Laurent polynomials in $\mathbb F_{M_1}[x_1,x_2,x_3,(x_1x_2x_3)^{-1}]$ and 
$$
S_{M_1}(f_1,f_2;g):=\sum_{\mathbf{x}\in (\mathbb F_{M_1}^\star)^3}\bar\chi_1(f_1(\mathbf{x}))\chi_1(f_2(\mathbf{x}))e\left(\frac{g(\mathbf{x})}{M_1}\right).
$$
Such mixed character sums have been studied in \cite{F} (following the method of \cite{DL}). In particular one has square root cancellation in the sum once the Laurent polynomial
$$
F(x_1,\dots,x_5):=g(x_1,x_2,x_3)+x_4f_1(x_1,x_2,x_3)+x_5f_2(x_1,x_2,x_3)
$$
is non-degenerate with respect to its Newton polyhedra $\Delta_\infty(F)$. Non-degeneracy in the special case under consideration can be checked quite easily. The lemma follows.
\end{proof}


\end{document}